\theoremstyle{plain}
\newtheorem{lma}{Lemma}[section]
\crefname{lma}{Lemma}{Lemmata}
\newtheorem{thm}[lma]{Theorem}
\crefname{thm}{Theorem}{Theorems}
\newtheorem{cor}[lma]{Corollary}
\crefname{cor}{Corollary}{Corollaries}
\newtheorem{prp}[lma]{Proposition}
\crefname{prp}{Proposition}{Propositions}
\theoremstyle{definition}
\newtheorem{pgr}[lma]{}
\crefname{pgr}{Paragraph}{Paragraphs}
\newtheorem{dfn}[lma]{Definition}
\crefname{dfn}{Definition}{Definitions}
\theoremstyle{remark}
\newtheorem{rmk}[lma]{Remark}
\crefname{rmk}{Remark}{Remarks}
\newtheorem{rmks}[lma]{Remarks}
\crefname{rmks}{Remarks}{Remarks}
\newtheorem{exa}[lma]{Example}
\crefname{exa}{Example}{Examples}
\newtheorem{qst}[lma]{Question}
\crefname{qst}{Question}{Questions}
\theoremstyle{plain}
\newcounter{IntroCount}
\newtheorem{thmIntro}[IntroCount]{Theorem}
\crefname{thmIntro}{Theorem}{Theorems}
\newtheorem{qstIntro}[IntroCount]{Question}
\crefname{qstIntro}{Question}{Questions}
\crefname{corIntro}{Corollary}{Corollaries}
\def\today{\number\day\space\ifcase\month\or   January\or February\or
   March\or April\or May\or June\or   July\or August\or September\or
   October\or November\or December\fi\   \number\year}
\newcommand{\andSep}{\,\,\,\text{ and }\,\,\,}
\newcommand{\axiomO}[1]{(O#1)}
\newcommand{\CuSgp}{$\mathrm{Cu}$-sem\-i\-group}
\newcommand{\NN}{{\mathbb{N}}}
\newcommand{\Cpct}{{\mathcal{K}}}
\newcommand{\red}{{\mathrm{red}}}
\newcommand{\ca}{$C^*$-algebra}
\newcommand{\dimnuc}{\dim_{\mathrm{nuc}}}
\DeclareMathOperator{\Cu}{Cu}
\DeclareMathOperator{\Lsc}{Lsc}
\title{Pure C*-algebras}
\date{\today}
\author{Ramon Antoine}
\author{Francesc Perera}
\author{Hannes Thiel}
\author{Eduard Vilalta}
\address{
R.~Antoine, 
Departament de Matem\`{a}tiques,
Universitat Aut\`{o}noma de Barcelona,
08193 Bellaterra, Barcelona, Spain, and
Centre de Recerca Matem\`atica, Edifici Cc, Campus de Bellaterra, 08193 Cerdanyola del Vall\`es, Barcelona, Spain}
\email[]{ramon.antoine@uab.cat}
\address{
F.~Perera, 
Departament de Matem\`{a}tiques,
Universitat Aut\`{o}noma de Barcelona,
\linebreak 08193 Bellaterra, Barcelona, Spain, and
Centre de Recerca Matem\`atica, Edifici Cc, Campus de Bellaterra,  08193 Cerdanyola del Vall\`es, Barcelona, Spain}
\email[]{francesc.perera@uab.cat}
\urladdr{https://mat.uab.cat/web/perera}
\address{Hannes~Thiel, 
Department of Mathematical Sciences, Chalmers University of Technology and the University of
Gothenburg, SE-412 96 Gothenburg, Sweden}
\email{hannes.thiel@chalmers.se}
\urladdr{www.hannesthiel.org}
\address{Eduard Vilalta, 
Department of Mathematical Sciences, Chalmers University of Technology and University of Gothenburg, SE-412 96 Gothenburg, Sweden}
\email[]{vilalta@chalmers.se}
\urladdr{www.eduardvilalta.com}
\thanks{
RA, FP and EV were partially supported by the Spanish Research State Agency (grant No.\  PID2020-113047GB-I00/AEI/10.13039/501100011033) and by the Comissionat per Universitats i Recerca de la Ge\-ne\-ralitat de Ca\-ta\-lu\-nya (grant No.\ 2021-SGR-01015). RA and FP were also partially supported by the Spanish State Research Agency through the Severo Ochoa and María de Maeztu Program for Centers and Units of Excellence in R\&D (CEX2020-001084-M).
HT and EV were partially supported by the Knut and Alice Wallenberg Foundation (KAW 2021.0140). EV was also supported by the Fields Institute for Research in Mathematical Sciences.
}
\subjclass[2010]%
{Primary
46L05; 
Secondary
19K14, 
46L80, 
46L85. 
}
\keywords{$C^*$-algebras, Cuntz semigroups, comparison, divisibility, Global Glimm Property}
\date{\today}
\begin{document}

\begin{abstract}
We demonstrate that pure \ca{s} form a robust class by proving that pureness follows from very weak comparison and divisibility properties.
Using this, we show that every simple, non-elementary \ca{} with a unique quasitrace and with very mild comparison is pure, and, as a result, has strict comparison.
Furthermore, sufficiently non-commutative \ca{s} of stable rank one and with weak comparison are likewise pure.

We also show that adequately non-elementary \ca{s} with finite nuclear dimension are pure, which leads to the verification of the non-simple Toms-Winter conjecture for a large class of \ca{s}.
\end{abstract}

\dedicatory{Dedicated to Mikael R{\o}rdam on the occasion of his $\mathit{65^{\mathit{th}}}$ birthday}

\maketitle

\section{Introduction}

Pureness is a regularity property for \ca{s} introduced by Winter in his seminal investigation  into $\mathcal{Z}$-stability and finite nuclear dimension of simple, nuclear \ca{s} \cite{Win12NuclDimZstable}.
Interest in these concepts was sparked by Toms' groundbreaking examples \cite{Tom08ClassificationNuclear}, where he constructed two simple, nuclear \ca{s} with the same $K$-theoretic and tracial data ---one pure, $\mathcal{Z}$-stable (i.e., tensorially absorbing the Jiang-Su algebra $\mathcal{Z}$), and of finite nuclear dimension, and the other not.~This led to a revision of the Elliott program, aiming at classifying separable, simple, nuclear \ca{s}.
Building on decades of research by numerous mathematicians, this was ultimately achieved in a series of remarkable results \cite{Win14LocalEllConj, TikWhiWin17QDNuclear, GonLinNiu20ClassifZstable2}.
(See also the recent approach \cite{CarGabSchTikWhi23arX:ClassifHom1} and the survey \cite{Whi23AbstractClassif}.)
The classification theorem states that  separable, simple, nuclear \ca{s} that are $\mathcal{Z}$-stable and satisfy the universal coefficient theorem (UCT) can be distinguished up to isomorphism by their Elliott invariant, which essentially consists of topological K-Theory and the tracial simplex.

As defined by Winter, a \ca{} is \emph{pure} if it has \emph{strict comparison} and is \emph{almost divisible}.
These are \ca{ic} analogs of two fundamental properties of projections in $\mathrm{II}_1$ factors discovered by Murray and von Neumann:
first, the comparison of projections in a $\mathrm{II}_1$ factor is determined by its unique trace $\tau$.
Second, every projection $p$ is divisible in the sense that for every $n \geq 2$ there exists a projection $q$ with $\tau(q)=\tfrac{1}{n}\tau(p)$.
Strict comparison for \ca{s} was first considered by Blackadar \cite{Bla88Comparison}, and is currently a well-established concept in the structure theory of \ca{s}.~This property is difficult to prove and its validty has far-reaching implications (see, for example, its role in the Toms-Winter conjecture below~\ref{subsec:TW}).~In fact, while comparison and divisibility hold automatically in every $\mathrm{II}_1$ factor, both strict comparison and almost divisibility can fail in simple \ca{s} \cite{Vil98SimpleCaPerforation, Vil99SRSimpleCa, DadHirTomWin09ZNotEmb}.

With these parallels in mind, one can think of finite, simple, pure \ca{s} as \ca{ic} analogs of $\mathrm{II}_1$ factors.
The concept also makes sense for infinite (that is, not finite) \ca{s}, and it turns out that a simple \ca{} is purely infinite if and only if it is infinite and pure, and one can think of such algebras as the \ca{ic} analogs of $\mathrm{III}$ factors.
More generally, nonsimple pure \ca{s} may be considered as analogs of von Neumann algebras with trivial type $\mathrm{I}$ summand.

Nowadays, the importance of pureness goes beyond its original intent. 
For separable \ca{s}, pureness of the central sequence algebra \cite{KirRor14CentralSeq} is equivalent to $\mathcal{Z}$-stability of the underlying \ca{} (\cite{PerThiVil24pre:CentrallyPure}); 
for non-nuclear \ca{s} ---such as those coming from non-amenable groups or certain infinite reduced free products--- pureness becomes an instrument weaker than $\mathcal{Z}$-stability for establishing strict comparison; 
for a \ca{} with mild divisibility (resp. comparison) properties, showing that it is \emph{not} pure implies that is has very poor comparison (resp. divisibility) in the precise sense of \cref{ThmIntro:CharPure} below. 
Due to these and other applications, the study of pureness has produced many important open questions: 
Is there a non-simple version of the Toms-Winter conjecture (\ref{subsec:NonSimpTW})? 
Do all $\mathrm{C}^*$-simple groups have strict comparison? 
(Equivalently, are they all pure? It is known that many are, as shown in \cite{AmrGaoKunPat24}, but in general the question is open.)
Are there real rank zero versions of R{\o}rdam's example \cite{Ror03FinInfProj}? 
(Conversely, are all non-elementary, simple, real rank zero \ca{s} pure?)

It is thus of utmost importance to understand which \ca{s} are pure. 
In this paper, we provide a new approach to the study of pureness by analyzing two new (and considerably weak) conceptual properties which we term \emph{controlled comparison} and \emph{functional divisibility}. 
The study of these notions ---discussed in \ref{subsec:CCFD} below--- allows us, among other things, to prove robustness results for the class of pure \ca{s}. 
One of the main applications of this new approach is the uncovering of new families where pureness can be verified. (The notions of nowhere scatteredness and the Global Glimm property mentioned below can be thought of as suitable
generalizations of non-elementariness to the non-simple case; see the discussion before; see the discussion before \cref{qst:TW}.)

\begin{thmIntro}[{\ref{prp:PureSR1}, \ref{prp:PureMonotrace}}]
\label{ThmB}
Let $A$ be a \ca{} with controlled comparison. 
Moreover, assume that
\begin{enumerate}[{\rm (i)}]
\item 
$A$ is nowhere scattered with real rank zero or stable rank one, or
\item 
$A$ is simple, unital, non-elementary, with a unique quasitracial state.
\end{enumerate}
Then $A$ is pure. In particular, $A$ has strict comparison.
\end{thmIntro}

\begin{thmIntro}[{\ref{prp:GGP-finNucDim-pure}}]
\label{thmD}
Every \ca{} with the Global Glimm Property and finite nuclear dimension is pure.
\end{thmIntro}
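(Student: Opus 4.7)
The plan is to reduce the statement to \cref{ThmA}. It suffices to show that a \ca{} $A$ with the Global Glimm Property and finite nuclear dimension is $(m,n)$-pure for some $m,n\in\NN$, after which \cref{ThmA} immediately upgrades this to pureness. Thus I would split the argument into the ``comparison half'' and the ``divisibility half'' of $(m,n)$-pureness.

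First, to obtain $m$-comparison, I would use finite nuclear dimension as the input. The key fact is that if $\dimnuc(A)\leq d$, then $A$ should satisfy $m$-comparison on its Cuntz semigroup for some $m$ depending only on $d$: a bounded-colour completely positive approximation allows one to transport a strict rank inequality into a genuine Cuntz subequivalence after finitely many steps. Results in this spirit are already recorded in \cite{RobTik17NucDimNonSimple}, and they supply the comparison half of $(m,n)$-pureness with no appeal to the Global Glimm Property.

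Second, to obtain $n$-almost divisibility for some $n$, I would combine the Global Glimm Property with finite nuclear dimension via the functional divisibility machinery developed in \cref{sec:divisibility}. The Global Glimm Property should be seen to imply a form of functional weak divisibility, since it delivers, inside every hereditary sub-\ca{}, approximately full nilpotent elements whose functional calculus produces approximately $n$-divisible pieces. Finite nuclear dimension then plays the role of a quantitative upgrade: one slices a positive element into boundedly many approximately orthogonal pieces (in the colour-counting style of \cite{RobTik17NucDimNonSimple}) and applies the Global Glimm Property to each slice, yielding a uniform $n$-almost divisibility bound with $n$ depending on $\dimnuc(A)$.

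The main obstacle I anticipate is precisely this second step. The Global Glimm Property is a qualitative existence statement that carries no intrinsic quantitative control, so the finite nuclear dimension must shoulder the entire burden of turning this existence into a uniform divisibility estimate. The functional divisibility framework of \cref{sec:divisibility} is what makes this passage possible, as it provides a divisibility invariant that interacts well with the slicing afforded by finite-colour completely positive approximations. Once both $m$-comparison and $n$-almost divisibility are in hand, \cref{ThmA} applies and concludes that $A$ is pure.
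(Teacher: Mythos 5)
Your overall skeleton coincides with the paper's: show that $A$ is $(m,n)$-pure and invoke \cref{prp:PureMain}. The comparison half is fine and is exactly what the paper does -- $\dimnuc(A)\leq m$ gives $m$-comparison by Robert's theorem, recorded in \cref{rmk:m-comparison}, with no appeal to the Global Glimm Property (the correct source is \cite[Theorem~1.3]{Rob11NuclDimComp} rather than \cite{RobTik17NucDimNonSimple}, but that is a minor point).

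The genuine gap is in the divisibility half, which is the only hard part of this theorem, and your sketch both misstates the mechanism and omits the step where the uniform bound is actually produced. The Global Glimm Property by itself yields only $(2,\omega)$-divisibility of $\Cu(A)$: for $x'\ll x$ one gets $y$ with $2y\leq x$ and $x'\leq\infty y$, with no bound $n$ and no functional estimate whatsoever. So the assertion that GGP ``implies a form of functional weak divisibility'' has no basis; moreover, the functional divisibility machinery of \cref{sec:divisibility} runs in the opposite direction (it is \emph{deduced from} $n$-almost divisibility, via \cref{prp:CaFAD}, and in the paper it serves the proof of \cref{prp:PureMain}, not of this theorem). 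The real content is converting the unbounded ``$\omega$'' into a uniform $N=N(m)$, and this requires a concrete argument: the paper first proves a nowhere-scattered version of \cite[Proposition~3.2(ii)]{RobTik17NucDimNonSimple} (\cref{prp:PreDivFromNucdim}), giving $y$ with $x'\ll ky\ll 2(m+1)x$; then an inductive construction (\cref{prp:CreateLargeOrthogonals}) uses $(2,\omega)$-divisibility, \axiomO{5}, infima with idempotents from \cref{thm:O7}, and -- crucially -- $m$-comparison to produce $y_0,\ldots,y_m$ with $y_0+\cdots+y_m\leq x$ and $x'\ll Ly_j$ for a constant $L=L(m,m)$; finally \cref{prp:DivFromNucdim} combines the two, again via $m$-comparison (from $kc'<_s y_j$ for all $j$ one concludes $kc'\leq y_0+\cdots+y_m\leq x$), to obtain $N$-almost divisibility with $N=3L(m+1)$. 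Note also that the roles in your sketch are reversed relative to what works: it is the Global Glimm Property that supplies the orthogonal slicing, and the nuclear dimension that supplies the quantitative largeness of each slice, with $m$-comparison needed \emph{inside} the divisibility argument, not only as the comparison half of pureness. As written, your proposal assumes at its central step essentially the statement ``$A$ is $n$-almost divisible for some $n$'' that was to be proved.
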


\cref{thmD} naturally fits in the context of the non-simple Toms-Winter conjecture (\ref{subsec:NonSimpTW}), while \cref{ThmB} can be applied to large families of \ca{s}: 
The conditions in (ii) are satisfied by any exact $\mathrm{C}^*$-simple group (\cite{BreKalKenOza17CSimpleUniqueTr}) and by Villadsen algebras of the second type \cite{Vil99SRSimpleCa}, while (i) is known to hold for all Villadsen algebras of the first type \cite{Vil98SimpleCaPerforation}.
Thus, exact $\mathrm{C}^*$-simple groups and Villadsen algebras have strict comparison if and only if they have controlled comparison, if and only if they are pure.

As previously stated, it is an important open problem to determine if all non-elementary simple real rank zero \ca{s} have strict comparison (or if they are $\mathcal{Z}$-stable under the assumption of nuclearity and separability). Part (i) of \cref{ThmB} also applies to this class of algebras, reducing the problem to proving controlled comparison.

\subsection{Controlled comparison and functional divisibility}\label{subsec:CCFD}

Together with his notion of pureness, Winter introduced in \cite{Win12NuclDimZstable} the quantified versions of \emph{$(m,n)$-pureness}, where $m$ and $n$ are non-negative integers measuring how big the failure of comparison ($m$) and divisibility ($n$) is. 
Surprisingly, it follows from Winter's work that for simple \ca{s} with locally finite nuclear dimension there is a dimension reduction phenomenon: 
$(m,n)$-pureness automatically implies $(0,0)$-pureness, that is, pureness. 
This was recently extended to the simple (possibly non-nuclear) setting in \cite[Theorem~10.5]{AntPerRobThi24TracesUltra}.
This dimension reduction phenomenon is one of the main techniques for establishing pureness or strict comparison outside the $\mathcal{Z}$-stable case, and a consequence of our \cref{ThmIntro:CharPure} is that it holds for all \ca{s}.


The novelty of this paper lies in the study of two unquantified notions which we call \emph{controlled comparison} and \emph{functional divisibility}. Each of these notions is weaker than any of their quantified counterparts, thus making them easier to check in concrete examples.

Inspired by the study initiated in \cite{RobTik17NucDimNonSimple}, we show that there is a beautiful interplay between these functional notions and relative central sequences. This ultimately allows us to bypass any ideal-lattice obstruction and obtain our main tool for verifying pureness:

\begin{thmIntro}[{\ref{prp:PureMain}}]
\label{ThmIntro:CharPure}
Given a \ca{} $A$, the following statements are equivalent:
\begin{enumerate}[{\rm (i)}]
\item
$A$ is pure, that is, it has strict comparison and is almost divisible.
\item
$A$ has controlled comparison and is functionally divisible. 
\end{enumerate}
\end{thmIntro}

\cref{ThmIntro:CharPure} implies, in particular, that the dimension reduction phenomenon alluded to above holds in full generality: 
If a \ca{} is $(m,n)$-pure for some $m$ and $n$, then it is pure. 

The present result provides a general statement that can be applied to non-simple, non-nuclear, \ca{s} such as, for example, central sequence algebras. 
Even in the simple case, \cref{ThmIntro:CharPure} represents a substantial improvement of \cite[Theorem~10.5]{AntPerRobThi24TracesUltra}: 
Given a \ca{} $A$ satisfying condition~(i) or~(ii) in \cref{ThmB}, it is not known if $A$ satisfies the divisibility property in $(m,n)$-pureness (known as \emph{$n$-almost divisibility}) for any $n$. 
However, we show in \cref{prp:FD-SR1,prp:FD-Monotrace} that any \ca{} satisfying condition~(i) or~(ii) in \cref{ThmB} is automatically functionally divisibile.

\subsection{The Toms-Winter conjecture}\label{subsec:TW}

This conjecture \cite{Win18ICM} aims at establishing a class of \ca{s} covered by the classification theorem mentioned at the beginning of the introduction.
Explicitly, it predicts that for a separable, simple, unital, non-elementary, nuclear \ca{}, the following properties are equivalent:
\begin{enumerate}
\item
The \ca{} has finite nuclear dimension, a noncommutative generalization of finite covering dimension introduced in \cite{WinZac10NuclDim}.
\item 
The \ca{} is $\mathcal{Z}$-stable.
\item
The \ca{} has strict comparison of positive elements (see \cref{sec:comparison}).
\end{enumerate}

In two groundbreaking papers \cite{Win12NuclDimZstable, CasEviTikWhiWin21NucDimSimple}, it was shown that conditions~(1) and~(2) are equivalent.
Further, it is known that~(2) implies~(3) \cite{Ror04StableRealRankZ}.
The implication (3)$\Rightarrow$(2) has been verified under certain additional assumptions \cite{Sat12arx:TraceSpace, KirRor14CentralSeq, TomWhiWin15ZStableFdBauer, Thi20RksOps}, but remains open in general.

The Toms-Winter conjecture is closely related to pureness, which we view as a property lying between~(2) and~(3).
Indeed, $\mathcal{Z}$-stability implies pureness, which in turn implies strict comparison of positive elements; 
see \cref{prp:ZStablePure}.
Further, strict comparison of positive elements implies pureness under the additional assumption of a positive solution to the Rank Problem; 
see \cref{sec:prelimns}.
Such a positive solution has been obtained for \ca{s} of real rank zero \cite{EllRor06Perturb} and of stable rank one \cite{Thi20RksOps, AntPerRobThi22CuntzSR1}.

\subsection{The non-simple Toms-Winter conjecture}\label{subsec:NonSimpTW}

The regularity properties of the Toms-Winter conjecture are also relevant for non-simple \ca{s}, and an investigation of their interplay was initiated by Robert and Tikuisis \cite{RobTik17NucDimNonSimple}.
While the assumption of unitality in the original formulation of the Toms-Winter conjecture could be dispensed with, the condition of being non-elementary is crucial.
Indeed, elementary \ca{s} have nuclear dimension zero, but they are neither $\mathcal{Z}$-stable nor pure.

The natural generalization of non-elementariness to the non-simple setting is the notion of nowhere scateredness: a \ca{} is \emph{nowhere scattered} if none of its quotients has a nonzero, elementary ideal, or equivalently if every hereditary sub-\ca{} is generated by nilpotent elements \cite{ThiVil24NowhereScattered}. 
This notion is tightly related to the Global Glimm Property: 
a \ca{} has the \emph{Global Glimm Property} if every hereditary sub-\ca{} contains an approximately full, nilpotent element \cite{ThiVil23Glimm}.~Clearly, every \ca{} with the Global Glimm Property is nowhere scattered. The converse remains open, and is known as the \emph{Global Glimm Problem}. In light of these considerations, the following question is natural:

\begin{qstIntro}
\label{qst:TW}
Let $A$ be a separable, nowhere scattered, nuclear \ca{}.
Are the following properties equivalent?
\begin{enumerate}
\item[(1)]
$A$ has finite nuclear dimension.
\item[(2)]
$A$ is $\mathcal{Z}$-stable.
\item[(3a)]
$A$ is pure.
\item[(3b)]
$A$ has strict comparison of positive elements.
\end{enumerate}
\end{qstIntro}

The implications (2)$\Rightarrow$(3a)$\Rightarrow$(3b) hold in general, and pureness always implies the Global Glimm Property;
see \cref{prp:ZStablePure}. Therefore, \cref{qst:TW} contains in particular the Global Glimm Problem for \ca{s} with finite nuclear dimension.

Partial results on the implication (3a)$\Rightarrow$(2) were established in \cite[Theorems~7.10, 7.15]{RobTik17NucDimNonSimple}, while the implication (2)$\Rightarrow$(1) has been shown to hold in a variety of cases; see, for example, \cite{BoeLi24NucDimSubhomTwGpd,BosGabSimWhi22NucDimOStableMaps,EllNiuSanTik20drASH}. As for simple \ca{s}, whether~(3b) implies~(3a) is related to the Rank Problem.

Combining \cref{thmD} with some of the above mentioned results, we obtain an answer to \cref{qst:TW} for a relevant class of \ca{s}, thereby verifying the non-simple Toms-Winter conjecture in this setting.

\begin{thmIntro}[{\ref{prp:NonsimpleTW}}]
\label{ThmE}
Let $A$ be a separable, locally subhomogeneous \ca{} that has stable rank one, topological dimension zero, and the Global Glimm Property.
Then, the answer to \cref{qst:TW} is positive.
\end{thmIntro}

\subsection*{Conventions}

Given a \ca{}, we let $A_+$ to denote its positive elements.
We let $\NN$ denote the set of natural numbers, including $0$.
Further, $\Cpct$ denotes the \ca{} of compact operators on a separable, infinite-dimensional Hilbert space.

\subsection*{Acknowledgements}

The authors thank Leonel Robert, Stuart White and Wilhelm Winter for valuable feedback on  earlier versions of this paper.

\section{Preliminaries}
\label{sec:prelimns}

The techniques used in this paper mainly concern the structure of the Cuntz semigroup of a \ca{} whose definition and properties we describe below.

\medskip

Given positive elements $a,b$ in a \ca{} $A$, one says that $a$ is \emph{Cuntz subequivalent} to $b$, denoted by $a \precsim b$, if there is a sequence $(r_n)_n$ in $A$ such that $\lim_{n\to\infty}\|a-r_nbr_n^*\|=0$. Further, $a$ and $b$ are \emph{Cuntz equivalent}, denoted $a \sim b$, if $a \precsim b$ and $b \precsim a$.
These relations were introduced by Cuntz in \cite{Cun78DimFct}. 

The \emph{Cuntz semigroup} $\Cu(A)$ of $A$ is defined as $\Cu(A)=(A\otimes \mathcal{K})_+/{\sim}$, equipped with the partial order induced by $\precsim$, and addition induced by addition of orthogonal positive elements. 

It is proved in \cite{CowEllIva08CuInv} that $\Cu(A)$ satisfies the following properties (usually referred to as axioms):

\begin{enumerate}
\item[\axiomO{1}]
If $(x_n)_n$ is an increasing sequence in $\Cu(A)$, then $\sup_n x_n$ exists.
\item[\axiomO{2}]
For any $x\in \Cu(A)$ there exists a sequence $(x_n)_n$ such that $x_n\ll x_{n+1}$ for all $n$ and $x=\sup_n x_n$.
(We say that $(x_n)_n$ is a $\ll$-increasing sequence.)
\item[\axiomO{3}]
If $x_1\ll x_2$ and $y_1\ll y_2$ in $\Cu(A)$, then $x_1+y_1\ll x_2+y_2$.
\item[\axiomO{4}]
If $(x_n)_n$ and $(y_n)_n$ are increasing sequences in $\Cu(A)$, then $\sup_n (x_n+y_n) = \sup_n x_n + \sup_n y_n$.
\end{enumerate}

The relation $\ll$ in these axioms is defined as follows:
$x\ll y$ if for every increasing sequence $(y_n)_n$ satisfying $y\leq \sup_n y_n$ there exists $n_0\in\NN$ such that $x\leq y_{n_0}$.
The relation $\ll$ is called the \emph{way-below relation}, or the \emph{compact containment relation}, and one says that `$x$ is way-below $y$' (or that `$x$ is compactly contained in $y$') if $x \ll y$.
An element $u\in S$ such that $u\ll u$ is termed \emph{compact}.

A positively ordered monoid satisfying axioms \axiomO{1}-\axiomO{4} is called a \emph{\CuSgp{}} and belongs to a category of semigroups that has thoroughly been studied (see, for example, \cite{AntPerThi18TensorProdCu, AntPerThi20CuntzUltraproducts, AntPerThi20AbsBivariantCu, AntPerThi20AbsBivarII}, among others; see also the recent survey \cite{GarPer23arX:ModernCu}). 
Yet, when dealing with semigroups obtained as Cuntz semigroups of \ca{s}, additional axioms and properties are satisfied:

\begin{enumerate}
\item[\axiomO{5}]
For all $x',x,y$ with $x'\ll x\leq y$ there exists $z$ such that $x'+z\leq y\leq x+z$. 
Moreover, if $x+w\leq y$ for some $w$, and $w'\ll w$, then $z$ may be chosen such that $w'\ll z$. 
\end{enumerate}

This axiom is usually referred to as the axiom of \textit{almost algebraic order}. For the Cuntz semigroup of an arbitrary \ca{}, the first part of the statement above was proved in \cite[Lemma~7.1]{RorWin10ZRevisited}, and the second part can be found in \cite[Proposition~4.6]{AntPerThi18TensorProdCu}.

\medskip 

Since we will often use the following consequence of \axiomO{5}, we add a proof here for completeness.

\begin{lma}[{\cite[Lemma~2.2]{ThiVil23Glimm}}]
\label{prp:DivO5}
Let $S$ be a \CuSgp{} satisfying \axiomO{5}, let $k\in\NN$, and let $z',z,x \in S$ satisfy $z' \ll z$ and $(k+1)z \leq x$.
Then there exists $d \in S$ such that 
\[
kz' + d \leq x \leq kz + d, \andSep 
x \leq (k+1)d.
\]
\end{lma}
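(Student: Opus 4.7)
My plan is to construct $d$ via a single application of axiom \axiomO{5}, invoking its \emph{moreover} part.

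First, from $z'\ll z$ and iterated use of \axiomO{3}, I get $kz'\ll kz$; combined with $kz\leq (k+1)z\leq x$, this yields $kz'\ll kz\leq x$. Then, rewriting the hypothesis as $kz+z\leq x$, I invoke the moreover part of \axiomO{5} with $w=z$ and $w'=z'$ (noting $w'\ll w$). This produces $d\in S$ satisfying
\[
kz'+d\leq x\leq kz+d \andSep z'\ll d.
\]
The first two inequalities of the conclusion follow immediately.

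For the third inequality $x\leq(k+1)d$, I would reason as follows: since $x\leq kz+d$ and $(k+1)d=kd+d$, it would suffice to show $kz\leq kd$, that is, $z\leq d$. While the moreover part only directly yields $z'\ll d$, my plan is to exploit the flexibility of the moreover: the same argument works with $w'$ replaced by any element $\ll z$. Using \axiomO{2} to write $z=\sup_n z_n$ along a $\ll$-increasing chain starting above $z'$, I would run the \axiomO{5} construction in a compatible way across the $z_n$'s so that the resulting $d$ dominates every $z_n$, and hence $z\leq d$ by passing to the supremum. Then $kz+d\leq kd+d=(k+1)d$, which combined with $x\leq kz+d$ gives the desired bound.

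The main obstacle will be precisely the third inequality $x\leq(k+1)d$: it is not a consequence of the sandwich $kz'+d\leq x\leq kz+d$ by cancellation, since cancellation is unavailable in a general \CatCu-semigroup. Making the informal passage ``$z\leq d$'' rigorous in this setting—by carefully combining the moreover part of \axiomO{5} with the $\ll$-approximation supplied by \axiomO{2}—is where the real content of the proof lies. The first two inequalities are routine; the divisibility bound is the heart of the matter.
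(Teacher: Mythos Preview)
Your first two inequalities are fine; the gap is exactly where you flagged it. The plan to ``run the \axiomO{5} construction in a compatible way across the $z_n$'s'' does not work: \axiomO{5} is a bare existence statement, and for each $z_n\ll z$ it produces a possibly different $d_n$ with $z_n\ll d_n$ and $kz'+d_n\leq x\leq kz+d_n$. There is no mechanism that forces the $d_n$'s to be increasing (so that one could pass to a supremum), and trying to enlarge a given $d_n$ by hand is blocked by the left inequality $kz'+d\leq x$, which bounds $d$ from above. So the passage from ``$z'\ll d$'' to ``$z\leq d$'' cannot be completed this way.

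The paper's one-line fix avoids the limiting argument entirely. Interpolate $z'\ll z''\ll z$ and apply \axiomO{5} to $kz'\ll kz''\leq x$ (not to $kz'\ll kz$), with the moreover part for $w=z$, $w'=z''$ (note $kz''+z\leq (k+1)z\leq x$). This yields $d$ with
\[
kz'+d\leq x\leq kz''+d \andSep z''\ll d.
\]
Now $z''\leq d$ is already in hand, so $x\leq kz''+d\leq kd+d=(k+1)d$, and also $x\leq kz''+d\leq kz+d$. The whole point is to shrink the upper end of the sandwich from $kz$ to $kz''$; then the element you need to dominate by $d$ is $z''$, which the moreover clause delivers directly.
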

\begin{proof}
Choose $z'' \in S$ such that $z' \ll z'' \ll z$. 
Applying \axiomO{5} for
\[
kz' \ll kz'' ,\quad 
z'' \ll z, \andSep
kz'' + z \leq x,
\]
we obtain $d \in S$ satisfying 
\[
kz' + d \leq x \leq kz'' + d, \andSep
z'' \ll d.
\]
Then $x \leq (k+1)d$, which shows that $d$ satisfies the desired conditions.
\end{proof}

Two more axioms are used very often in the theory of Cu-semigroups.
The first one, \axiomO{6}, is a weaker form of \textit{Riesz decomposition} and the second one, \axiomO{7}, is a weak form of interpolation. 
That the Cuntz semigroup of any \ca{} satisfies both \axiomO{6} and \axiomO{7} was proved in \cite[Proposition~5.1.1]{Rob13Cone} and \cite[Proposition~2.2]{AntPerRobThi21Edwards}, respectively. 
We will not need the precise formulation of these axioms here, but rather a very useful consequence of them that allows to take infima with idempotent elements. 
We make this precise below.

Recall that a \CuSgp\ $S$ is said to be \textit{countably based} if all elements can be written as suprema of elements from a fixed countable subset.
It is known that $\Cu(A)$ is countably based whenever $A$ is separable;
see \cite[Lemma~1.3]{AntPerSan11PullbacksCu}.

\begin{thm}[{\cite[Theorems~2.4, 2.5]{AntPerRobThi21Edwards}}]
\label{thm:O7}
Let $S$ be a countably based \CuSgp{} satisfying \axiomO{5}-\axiomO{7}.
Then each $x\in S$ and each idempotent element $w\in S$ have an infimum $x\wedge w$ in $S$. Further, the map $S\to S$ given by $x\mapsto x\wedge w$ is a monoid homomorphism preserving the order and the suprema of increasing sequences.
\end{thm}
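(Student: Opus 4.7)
I would construct $x \wedge w$ as the supremum of a directed family of common lower bounds, and then exploit the idempotence $w + w = w$ together with the axioms \axiomO{5}--\axiomO{7} to verify the infimum property and all the stated algebraic and continuity properties.

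For the construction, let $L(x,w) := \{v \in S : v \leq x,\ v \leq w\}$. Using \axiomO{2} and an application of \axiomO{7}, together with the observation that $v_1 + v_2 \leq w + w = w$ whenever $v_1, v_2 \leq w$, one shows that $L(x,w)$ is upward directed: given $v_1, v_2 \in L(x,w)$ and interpolants $v_i' \ll v_i$, first \axiomO{7} produces a common upper bound that is below $x$, and the condition below $w$ is then enforced via \axiomO{5} combined with the fact that $v_1 + v_2$ already witnesses a bound $\leq w$. Because $S$ is countably based, $L(x,w)$ admits a $\ll$-increasing cofinal sequence, whose supremum (existing by \axiomO{1}) is defined to be $x \wedge w$. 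The greatest-lower-bound property follows by taking any $y \leq x,w$ and $y' \ll y$: then $y' \in L(x,w)$, hence $y' \leq x \wedge w$, and passing to the supremum over $y' \ll y$ via \axiomO{2} yields $y \leq x \wedge w$.

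For the monoid homomorphism part, monotonicity is immediate from the definition. Additivity $(x_1 + x_2) \wedge w = (x_1 \wedge w) + (x_2 \wedge w)$ splits into two inequalities: the direction $\geq$ follows from $(x_1 \wedge w) + (x_2 \wedge w) \leq x_1 + x_2$ together with the idempotence bound $(x_1 \wedge w) + (x_2 \wedge w) \leq w + w = w$; the direction $\leq$ uses \axiomO{6} (Riesz refinement) to decompose any $v \in L(x_1 + x_2, w)$ as $v \leq v_1 + v_2$ with $v_i \leq x_i$, and then invokes idempotence once more (combined with \axiomO{5}) to adjust the pieces $v_i$ so they remain below $w$. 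Preservation of suprema of increasing sequences $(x_n)_n$ reduces to its nontrivial direction: taking $v \ll (\sup_n x_n) \wedge w$, the condition $v \ll \sup_n x_n$ gives $v \leq x_{n_0}$ for some $n_0$, and $v \leq w$ then yields $v \leq x_{n_0} \wedge w \leq \sup_n (x_n \wedge w)$; a final sup approximation via \axiomO{2} completes the equality.

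The principal technical obstacle is the directedness argument in the construction, where one must simultaneously force a common upper bound of two elements of $L(x,w)$ to remain below both $x$ and $w$; this is where \axiomO{7} and the idempotence of $w$ interact essentially. Without the idempotence, no analogous infimum need exist in a general Cu-semigroup, so this step encodes exactly what makes the theorem possible. The additivity argument in Step 3 is subtle for the same reason: a Riesz refinement via \axiomO{6} only splits along the sum $x_1 + x_2$, and keeping the pieces below $w$ relies once again on $w + w = w$.
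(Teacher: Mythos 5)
The paper itself contains no proof of this statement: it is imported from \cite[Theorems~2.4, 2.5]{AntPerRobThi21Edwards}, so the relevant comparison is with that reference, and your strategy --- define $x\wedge w$ as the supremum of the set $L(x,w)$ of common lower bounds, get directedness from \axiomO{7} together with $v_1+v_2\le w+w=w$, use countable basedness to produce the supremum, prove additivity with \axiomO{6}, and check preservation of suprema of increasing sequences directly --- is essentially the strategy of the cited proof. The monotonicity and supremum-preservation parts of your argument are complete and correct.

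Two steps, however, are not proofs as written. First, \axiomO{7} does not yield upward directedness of $L(x,w)$ itself: given $v_1,v_2\le x,w$, its interpolant only dominates prescribed approximants $v_1'\ll v_1$ and $v_2'\ll v_2$ (and it can be taken $\le x$ and $\le v_1+v_2\le w$; this is exactly where idempotence enters --- \axiomO{5} plays no role here). So you only obtain directedness up to $\ll$, and your next assertion, that countable basedness then provides a ``$\ll$-increasing cofinal sequence'' in $L(x,w)$, is both too strong (no literally cofinal $\ll$-increasing sequence need exist once $L(x,w)$ has a maximum) and unproved. What one must construct is an increasing sequence in $L(x,w)$ whose supremum dominates every element of $L(x,w)$: enumerate the countably many basis elements way below members of $L(x,w)$ and, at each recursive step, apply the \axiomO{7}-interpolation to suitable approximants and then use \axiomO{2} to replace the two elements known to be way below the newly obtained member of $L(x,w)$ by a single element dominating both. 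This bookkeeping is precisely where countable basedness is used, and it is the technical core your sketch skips. Second, in the additivity step the proposed ``adjustment via \axiomO{5} and idempotence'' of the Riesz pieces is not an argument that would go through on its own; it is also unnecessary, since the precise form of \axiomO{6} allows the pieces to be chosen below $v$ as well as below $x_1$ and $x_2$, hence automatically below $w$ because $v\le w$. In short, the route is the right one, but both invocations of \axiomO{5} are misattributions, and the directed-supremum construction needs to be carried out rather than asserted.
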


Let $S$ be a \CuSgp{}. 
A map $\lambda\colon S\to [0,\infty]$ is called a \emph{functional} if $\lambda$ preserves addition, the zero element, order and suprema of increasing sequences.
We denote by $F(S)$ the set of functionals on $S$.
This is a cone when endowed with the operations of pointwise addition and pointwise scalar multiplication by positive real numbers. 
When equipped with the appropriate topology, $F(S)$ becomes a compact Hausdorff cone (see \cite[Theorems~4.4, 4.8]{EllRobSan11Cone}, \cite[Section~2.2]{Rob13Cone};
see also \cite[Theorem~3.17]{Kei17CuSgpDomainThy}).

Given $x\in S$, we obtain a map $\widehat{x}\colon F(S)\to [0,\infty]$ defined by $\widehat{x}(\lambda)=\lambda(x)$, for $\lambda\in F(S)$. 
The map $\widehat x$ is linear and lower semicontinuous. 
Denote by $\Lsc(F(S))$ the set of all linear, lower semicontinuous maps on $F(S)$ with values in $[0,\infty]$. 

Two main questions arise in this setting. 
The first concerns the extent to which the ordering in $A$ can be inferred from the ordering in $\textrm{Lsc}(F(S))$. 
Note that whilst $x\mapsto \widehat{x}$ is an additive map that preserves the order and suprema of increasing sequences, it typically does not preserve the way-below relation and, moreover, is generally not an order embedding. 
Still, the result below will be useful.

Given elements $x$ and $y$ in a partially ordered semigroup, one writes $x <_s y$ if there exists $k \in \NN$ such that $(k+1)x \leq ky$. 

\begin{prp}[{\cite[Lemma~2.2.5]{Rob13Cone}, \cite[Lemma~5.3]{AntPerRobThi24TracesUltra}}]
\label{prp:wayBelowLFS}
Let $S$ be a \CuSgp{} satisfying \axiomO{5}, and let $x,y \in S$. Then:
\begin{enumerate}[{\rm(i)}]
\item 
If $x \ll y$, and $s,t\in(0,\infty]$ satisfy $s<t$, we have $s\widehat{x} \ll t\widehat{y}$ in $\Lsc(F(S))$.
\item 
If $\widehat{x}\leq\gamma\widehat{y}$ for some $\gamma\in (0,1)$, then $x'<_s y$ for all $x' \in S$ with $x'\ll x$.
\end{enumerate}
\end{prp}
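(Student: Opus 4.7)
The plan is to prove \emph{(i)} by extracting from $x \ll y$ a positive ``slack'' in $S$ via axiom \axiomO{5}, and then combining it with the strict inequality $s < t$ to exhibit a quantitative margin forcing the way-below relation in $\Lsc(F(S))$. Part \emph{(ii)} will follow directly by invoking the cited characterisation of $<_s$ in terms of functionals.

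For \emph{(i)}, start with \axiomO{2}: pick $y_0 \in S$ with $x \ll y_0 \ll y$. Apply \axiomO{5} to $x \ll y_0 \leq y$ to produce $z \in S$ with
\[
x + z \leq y \leq y_0 + z.
\]
Passing to functionals gives $\widehat{x} + \widehat{z} \leq \widehat{y}$ in $\Lsc(F(S))$. Writing $t = s + \delta$ with $\delta > 0$, one computes
\[
t\widehat{y} \;=\; s\widehat{y} + \delta\widehat{y} \;\geq\; s\widehat{x} + s\widehat{z} + \delta\widehat{y},
\]
exhibiting a nonnegative margin $s\widehat{z} + \delta\widehat{y}$ separating $s\widehat{x}$ from $t\widehat{y}$. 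Combined with the abstract characterisation of the way-below relation in the compact Hausdorff cone $\Lsc(F(S))$ (see \cite[\S 2.2]{Rob13Cone} and \cite[Thm.~4.8]{EllRobSan11Cone}), this margin is what implements the desired $s\widehat{x} \ll t\widehat{y}$.

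For \emph{(ii)}, the hypothesis is $\widehat{x} \leq \gamma\widehat{y}$ with $\gamma < 1$. Any $x' \ll x$ satisfies $x' \leq x$, and so $\widehat{x'} \leq \widehat{x} \leq \gamma\widehat{y}$. The cited results \cite[Prop.~2.2.2]{Rob13Cone} and \cite[Thm.~5.2.19]{AntPerThi18TensorProdCu} then yield directly some $k \in \NN$ with $(k+1)x' \leq ky$, that is, $x' <_s y$.

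The main obstacle is the careful handling of the way-below relation in $\Lsc(F(S))$ in part \emph{(i)}: since functionals in $F(S)$ may take the value $+\infty$, the relation $\ll$ in $\Lsc(F(S))$ cannot be read off a pointwise multiplicative estimate $(1+\varepsilon)f \leq g$ alone, and the positive margin produced above must be converted into $\ll$ through the abstract definition, testing against increasing sequences whose supremum dominates $t\widehat{y}$ and using the slack $s\widehat{z} + \delta\widehat{y}$ to absorb the tail. Part \emph{(ii)}, once the cited structural result is granted, is essentially immediate.
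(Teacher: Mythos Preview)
The paper does not supply its own proof of this proposition; it records the statement with references to \cite[Lemma~2.2.5]{Rob13Cone} and \cite[Lemma~5.3]{AntPerRobThi24TracesUltra}, and remarks just before it that (ii) is a consequence of \cite[Proposition~2.2.2]{Rob13Cone} or \cite[Theorem~5.2.19]{AntPerThi18TensorProdCu}. Your treatment of (ii) follows exactly this route and is fine.

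Your argument for (i), however, is not a proof but an outline with the decisive step missing. You use \axiomO{5} to manufacture an additive slack $s\widehat{z}+\delta\widehat{y}$ between $s\widehat{x}$ and $t\widehat{y}$, and then assert that ``combined with the abstract characterisation of the way-below relation \ldots\ this margin is what implements the desired $s\widehat{x}\ll t\widehat{y}$''. No such characterisation is stated or applied, and your final paragraph openly concedes that converting the margin into $\ll$ is the ``main obstacle'' that still ``must'' be carried out. This is a genuine gap, not a routine omission: an inequality $f+m\leq g$ with $m\geq 0$ in $\Lsc(F(S))$ does not by itself yield $f\ll g$, and your particular margin can degenerate --- for instance, if $x=y$ is compact in $S$ one may take $z=0$, and you are left needing $s\widehat{y}\ll (s+\delta)\widehat{y}$ from the tautology $s\widehat{y}+\delta\widehat{y}\leq (s+\delta)\widehat{y}$, which is precisely the claim in that special case. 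What is actually required is the concrete description of the topology on $F(S)$ and of how suprema and the way-below relation behave in $\Lsc(F(S))$, as developed in \cite[Section~2.2]{Rob13Cone}; you cite that section but never invoke any specific statement from it.
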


The second question that arises consists of determining the range of the map $x\mapsto \widehat{x}$. 
This is referred to as the \textit{Rank Problem} and was explored in detail in \cite{Thi20RksOps} and \cite{AntPerRobThi22CuntzSR1}.
For a \CuSgp{} $S$, let us denote by $\textrm{L}(F(S))$ the smallest subsemigroup of $\Lsc(F(S))$ closed under suprema of increasing sequences and containing all elements of the form $\tfrac{1}{n}\widehat{x}$ (see \cite[Section 3]{Rob13Cone}). 
Recall that an element $x \in S$ is termed \textit{soft} if for every $x' \in S$ with $x' \ll x$ there exists $t \in S$ such that $x' + t \ll x$ and $x' \ll \infty t$
(see \cite[Definition~4.3, Remark~4.4]{ThiVil24SoftOps}; 
for \ca{s} of stable rank one, this agrees with \cite[Definition~5.3.1]{AntPerThi18TensorProdCu}).

The following theorem summarizes the realization results from \cite{AntPerRobThi22CuntzSR1}, which will be required in \cref{sec:divisibility}.
The existence of the map $\alpha$ is shown in \cite[Theorem~7.2]{AntPerRobThi22CuntzSR1}, and the properties of $\alpha$ are proved in \cite[Proposition~7.4, Theorem~7.13, Proposition~8.2, Theorem~8.4, Corollary~8.5]{AntPerRobThi22CuntzSR1}.

\begin{thm}[{\cite{AntPerRobThi22CuntzSR1}}]
\label{thm:realization-ranks} 
Let~$A$ be a separable, nowhere scattered \ca{} with stable rank one. 
Then there is a natural, additive, order-preserving map $\alpha\colon \textrm{L}(F(\Cu(A))) \to \Cu(A)$ that preserves suprema of increasing sequences and such that $f=\widehat{\alpha(f)}$ for every $f \in \textrm{L}(F(\Cu(A)))$.

The image of $\alpha$ consists of soft elements, and we have 
\[
\alpha(f+\widehat x)=\alpha(f)+x
\] 
whenever $x \in \Cu(A)$ and $f \in \textrm{L}(F(\Cu(A)))$ satisfy $\widehat x\leq \infty f$. 
\end{thm}

\section{Variants of comparison}
\label{sec:comparison}

In this section we deal with various comparison properties. 
In particular, we recall the definition of \emph{$m$-comparison} (first considered in \cite[Lemma~6.1]{TomWin09Villadsen} and later formalized in \cite[Definition~2.8]{OrtPerRor12CoronaStability}) and clarify the equivalence between  $0$-comparison (also known as almost unperforation) with the notion of strict comparison for \ca{s}. 
We also recall the concept introduced in \cite[Definition~6.1]{AntPerRobThi24TracesUltra}, which in this paper is renamed as \emph{controlled comparison}.

\begin{dfn}
\label{dfn:m-comparison}
Let $m \in \NN$.~A \CuSgp{} $S$ has \emph{$m$-comparison} if for all $x,y_0,\ldots,y_m \in S$ satisfying $x <_s y_j$ for $j=0,\ldots,m$, we have $x \leq y_0 + \ldots + y_m$. 

We say that a \ca{} has \emph{$m$-comparison} if its Cuntz semigroup does.
A \CuSgp{} or \ca{} is said to be \emph{almost unperforated} if it has $0$-comparison.
\end{dfn}

\begin{rmks}
\label{rmk:m-comparison}
(1) 
Robert showed in \cite[Theorem~1.3]{Rob11NuclDimComp} that a \ca{} has $m$-comparison whenever it has nuclear dimension at most $m$.

(2) 
Let $m\in\NN$ and let $S$ be a \CuSgp{} that has $m$-comparison.
Then, $x<_sy$ implies $x \leq (m+1)y$, for all $x,y \in S$.
\end{rmks}

The notion of strict comparison was first considered in \cite{Bla88Comparison}.~For \CuSgp{s}, the concept was considered in \cite[Section~6]{EllRobSan11Cone}.

\begin{dfn}
\label{dfn:str-comparison}
A \CuSgp{} $S$ has \emph{strict comparison} if for all $x,y\in S$ satisfying $x\leq \infty y$ and $\lambda(x)<\lambda(y)$ for any $\lambda\in F(S)$ with $\lambda(y)=1$, one has $x \leq y$.

A \ca{} $A$ has \emph{strict comparison} of positive elements by quasitraces (specifically, $[0,\infty]$-valued, lower semicontinuous $2$-quasitraces) if its Cuntz semigroup does. 
\end{dfn}

\begin{pgr}[Strict comparison and almost unperforation]
\label{rmk:str-comparison}
We take the opportunity here to clarify the equivalence between the concepts of strict comparison and almost unperforation. 
Let $S$ be a \CuSgp{}, and let $x,y\in S$. The following conditions relating the comparison of $x,y$ and $\widehat{x}, \widehat{y}$  were considered in \cite{AntPerThi18TensorProdCu}:
\begin{enumerate}
\item[(i)]
We have $x <_s y$, that is, there exists $k \in \NN$ such that $(k+1)x \leq ky$.
\item[(ii)]
We have $\widehat{x} <_s \widehat{y}$, that is, there exists $\varepsilon \in (0,1)$ such that $\widehat{x} \leq (1-\varepsilon)\widehat{y}$.
\item[(iii)]
We have $x \leq \infty y$ and $\lambda(x)<\lambda(y)$ for any $\lambda\in F(S)$ with $\lambda(y)=1$.
\item[(iv)]
We have $x' <_s y$ for every $x' \in S$ with $x' \ll x$.
\end{enumerate}
It was shown in \cite[Theorem~5.2.18]{AntPerThi18TensorProdCu} that the implications (i)$\Rightarrow$(ii)$\Rightarrow$(iii)$\Rightarrow$(iv) always hold. 
Using this, it follows that the following conditions are equivalent for $S$.
(This was first obtained in \cite[Proposition~6.2]{EllRobSan11Cone}; see also \cite[Proposition~5.2.20]{AntPerThi18TensorProdCu}.)

\begin{enumerate}
\item[(i')]
$S$ is almost unperforated.
\item[(ii')]
If $x,y \in S$ satisfy $\widehat{x} \leq (1-\varepsilon)\widehat{y}$ for some $\varepsilon \in (0,1)$, then $x \leq y$.
\item[(iii')]
$S$ has strict comparison.
\end{enumerate}

\medskip

\noindent
Now, let $A$ be a \ca. There is a natural correspondence between functionals on $\Cu(A)$ and quasitraces on $A$. This is defined by sending a quasitrace $\tau \colon (A \otimes \mathcal{K})_+ \to [0,\infty]$ to the functional~$d_\tau \colon \Cu(A) \to [0,\infty]$ given by
\[
d_\tau([a]) = \lim_{n \to \infty} \tau(a^{1/n})
\]
for $a \in (A \otimes \mathcal{K})_+$;
see, for example, \cite[Proposition~4.2]{EllRobSan11Cone}.

Using this, it follows that $A$ has strict comparison if and only if for all $a,b \in (A \otimes \mathcal{K})_+$ such that $a$ belongs to the closed ideal generated by $b$, and such that $d_\tau([a]) < d_\tau([b])$ for every quasitrace $\tau$ with $d_\tau([b])=1$, we have $a \precsim b$.

It also follows from the above considerations that a \ca{} $A$ has strict comparison (of positive elements by quasitraces) if and only if it is almost unperforated.

Some of the confusion surrounding the notion of strict comparison arises since it is applied in a more restrictive setting, for example by considering comparison of projections (instead of positive elements), or by considering comparison by traces (instead of quasitraces);
see \cite[Section~3]{NgRob16CommutatorsPureCa} for some clarifications.

Another source of confusion is that strict comparison is often used in the context of simple \ca{s}, in which case every element is automatically contained in the closed ideal generated by a nonzero element.
Therefore, a simple, unital \ca{}~$A$ has strict comparison if and only if for all nonzero elements $a,b \in (A \otimes \mathcal{K})_+$ such that $d_\tau([a]) < d_\tau([b])$ for every normalized quasitrace $\tau$, we have $a \precsim b$.
\end{pgr}

For the definition below, we need to recall the notion of a scale in a \CuSgp.

\begin{pgr}[Scales]
Let $S$ be a \CuSgp.
Following \cite[Definition~4.1]{AntPerThi20CuntzUltraproducts}, a \emph{scale} in $S$ is a subset $\Sigma \subseteq S$ satisfying the following conditions:
\begin{enumerate}[{\rm (i)}]
\item 
$\Sigma$ is downward hereditary: 
If $x \leq y$ for some $x \in S$ and $y \in \Sigma$, then $x \in \Sigma$.
\item 
$\Sigma$ is closed under suprema of increasing sequences.
\item 
$\Sigma$ generates $S$ as an ideal, that is, for $x',x \in S$ with $x' \ll x$, there exist $y_1,\ldots,y_n \in \Sigma$ such that $x' \leq y_1+\ldots+y_n$.
\end{enumerate}
We say that the pair $(S,\Sigma)$ is a \emph{scaled \CuSgp{}}.

Given a scale $\Sigma$ and $d \in \NN$, the \emph{$d$-fold amplification} of $\Sigma$ is defined as $\Sigma^{(0)} := \{0\}$, and for $d \geq 1$ as
\[
\Sigma^{(d)} 
:= \left\{ x \in S : \parbox{7cm}{for each $x' \in S$ with $x' \ll x$ there exist $y_1,\ldots,y_d \in \Sigma$ such that $x' \leq y_1+\ldots+y_d$} \right\}.
\]

If $A$ is a \ca{}, then 
\[
\Sigma_A := \big\{ x \in \Cu(A) : x \leq [a] \text{ for some $a \in A_+$} \big\}
\]
is a scale in $\Cu(A)$, and the pair $(\Cu(A),\Sigma_A)$ is called the \emph{scaled Cuntz semigroup} of $A$;
see \cite[Paragraph~4.2]{AntPerThi20CuntzUltraproducts} and \cite[Lemma~3.3]{ThiVil23Glimm}, and  also \cite[Paragraph~4.5]{AntPerRobThi24TracesUltra}.
By \cite[Proposition~7.4]{AntPerRobThi24TracesUltra}, for $d \geq 1$ we have
\[
\Sigma_A^{(d)} = \big\{ x \in \Cu(A) : x \leq [a] \text{ for some $a \in M_d(A)_+$} \big\}.
\]
\end{pgr}

Given $N\in\NN$ and elements $x$ and $y$ in a partially ordered semigroup, we write $x \leq_N y$ if $nx \leq ny$ for all $n\in\NN$ with $n \geq N$.
The following was termed `locally bounded comparison amplitude' in \cite[Definition~6.1]{AntPerRobThi24TracesUltra}.

\begin{dfn}
A scaled \CuSgp{} $(S,\Sigma)$ has \emph{controlled comparison} if for every $\gamma \in (0,1)$ and $d\in\NN$ there exists $N=N(\gamma,d)$ such that the following holds:
\[
\widehat{x}\leq\gamma\widehat{y} \quad \textrm{ implies } \quad x \leq_N y, \quad \text{ for all } x,y\in\Sigma^{(d)}.
\]

We say that a \ca{} $A$ has controlled comparison if $(\Cu(A),\Sigma_A)$ does.
\end{dfn}

The next result follows easily from \cite[Proposition~10.3]{AntPerRobThi24TracesUltra}.

\begin{prp}
\label{prp:m-comparison-CC}
If a \ca{} has $m$-comparison for some $m \in \NN$, then it has controlled comparison.
\end{prp}
\begin{proof}
In order to apply \cite[Proposition~10.3]{AntPerRobThi24TracesUltra} it is required that the Cuntz semigroup of a \ca{} satisfies \axiomO{5}, \axiomO{6}, and the so-called Edward's condition.
The first two are always satisfied, as mentioned in the preliminaries, and the third condition was shown to hold in \cite[Theorem~5.3]{AntPerRobThi21Edwards}.
\end{proof}	

\begin{cor}
\label{prp:FinNucDim-CC}
Every \ca{} with finite nuclear dimension has controlled comparison.
\end{cor}
\begin{proof}
Let $A$ be a \ca{} with nuclear dimension at most $m$ for some $m\in\NN$.
Then $A$ has $m$-comparison by \cite[Theorem~1.3]{Rob11NuclDimComp}, hence the result follows from \cref{prp:m-comparison-CC}.
\end{proof}

\section{Variants of divisibility}
\label{sec:divisibility}

In this section we introduce the concept of \emph{functional divisibility} and show it is a natural generalization of $n$-almost divisibility (as introduced in \cite[Definition~3.5]{Win12NuclDimZstable} for simple, unital \ca{s}, and later refined in \cite[Section~2.3]{RobTik17NucDimNonSimple} for general \ca{s}). We prove that functional divisibility is automatic for nowhere scattered \ca{s} of stable rank one, and also for simple, unital, non-elementary \ca{s} with a unique normalized quasitrace.

\begin{dfn}
\label{dfn:n-divisibility}
Let $n\in\NN$.
A \CuSgp{} $S$ is \emph{$n$-almost divisible} if for every $x' \ll x$ in $S$ and every $k \in \NN$ with $k \geq 1$, there exists $y \in S$ such that
\[
ky \leq x, \andSep
x' \leq (k+1)(n+1)y.
\]

We say that a \ca{} is \emph{$n$-almost divisible} if its Cuntz semigroup is.~A \CuSgp{} or \ca{} is said to be \emph{almost divisible} if it is $0$-almost divisible.~In other words, if for all $x' \ll x$ in $S$ and $k \in \NN$ with $k \geq 1$ there exists $y \in S$ such that $ky \leq x$, and $x' \leq (k+1)y$.
\end{dfn}

\begin{pgr}[Connections with earlier divisibility notions]
\label{rmk:n-divisibility}
In parts of the literature, `almost divisible' is defined as the slightly stronger condition that for every $x \in S$ and  $k \in \NN$ with $k \geq 1$ there exists $y \in S$ such that
\[
ky \leq x \leq (k+1)y.
\]

This version of `almost divisibility' implies the one from \cref{dfn:n-divisibility}, and for almost unperforated \CuSgp{s} both versions agree by \cite[Proposition~7.3.7]{AntPerThi18TensorProdCu}.
The advantage of the version from \cref{dfn:n-divisibility} is that it enjoys better permanence properties; 
see \cite[Remark~7.3.5]{AntPerThi18TensorProdCu}.

Almost divisibility is automatic in certain cases.~For example, it was shown in \cite[Theorem~9.1]{ThiVil24NowhereScattered} that every nowhere scattered \ca{}~$A$ of real rank zero is almost divisible. 
In fact, $\Cu(A)$ is even `weakly divisible' in the sense that for every $x\in \Cu(A)$, there are $y,z\in \Cu(A)$ such that $x=2y+3z$; 
see also \cite[Section~6]{AraGooPerSil10NonSimplePI}. 
In the simple case, and for the Murray-von Neumann semigroup, this was obtained in \cite[Proposition~5.3]{PerRor04AFembeddings}.
\end{pgr}

Inspired by the results in \cite[Section~6]{RobTik17NucDimNonSimple} and also in \cite{Win12NuclDimZstable}, we introduce the following more general divisibility condition in terms of comparison by functionals.

\begin{dfn}
\label{dfn:FuncDiv}
We say that a \CuSgp{} $S$ is \emph{functionally divisible} if the following conditions are satisfied:
\begin{itemize}
\item[(L)] 
For every $x' \ll x$ in $S$, every $k \in \NN$ with $k\geq 1$, and every $\varepsilon\in(0,1)$, there exists $y \in S$ such that
\[
ky \leq x, \andSep
(1-\varepsilon)\widehat{x'} \leq k\widehat{y}.
\]
\item[(U)] 
For every $x' \ll x$ in $S$, every $k \in \NN$ with $k \geq 1$, and every $\varepsilon\in(0,1)$, there exists $z \in S$ such that
\[
z \leq x,\quad 
(1-\varepsilon)k\widehat{z} \leq \widehat{x}, \andSep 
x' \leq kz.
\]
\end{itemize}
We say that a \ca{} $A$ is \emph{functionally divisible} if $\Cu(A)$ is.
\end{dfn}

\begin{rmk}
\label{rmk:FuncDiv}
Let $S$ be a \CuSgp{} satisfying \axiomO{5}, and let $x \in S$. 
While \cref{dfn:n-divisibility} captures notions of divisibility of $x$ in $S$, one may also consider the divisibility of $\widehat{x}$  in $\Lsc(F(S))$.
The straightforward concept would be to require that for every $x \in S$ and every $k \in \NN$ with $k \geq 1$, there exists $y \in S$ such that $k\widehat{y} = \widehat{x}$.
Taking also the way-below relation into account in order to obtain better permanence properties, one is led to the following notion: 
\begin{itemize}
\item[(D)] 
For every $x \in S$, every $f \in \Lsc(F(S))$ with $f \ll \widehat{x}$, and every $k \in \NN$ with $k \geq 1$, there exists $y \in S$ such that 
\[
f \leq k\widehat{y} \leq \widehat{x}.
\]
\end{itemize}

Using \cref{prp:wayBelowLFS}(i), we see that a function $f \in \Lsc(F(S))$ satisfies $f \ll \widehat{x}$ if and only if there exists $x' \in S$ with $x' \ll x$ and $\varepsilon>0$ such that $f \leq (1-\varepsilon)\widehat{x'}$.
Thus, condition~(D) holds if and only if for every $x' \ll x$ in $S$, every $k \in \NN$ with $k \geq 1$, and every $\varepsilon>0$, there exists $y \in S$ such that 
\[
(1-\varepsilon)\widehat{x'} \leq k\widehat{y} \leq \widehat{x}.
\]

Conditions~(L) and~(U) in \cref{dfn:FuncDiv} are natural strengthenings of this, where one of the inequalities on functionals is upgraded to an inequality in $S$.
The `L' stands for `lower', since in this condition the element $ky$ is smaller than $x$, while the `U' stands for `upper' since here $ky$ is larger than $x'$.
\end{rmk}

The next result shows that it suffices to consider $k=2$ in order to verify condition~(L) in \cref{dfn:FuncDiv}.

\begin{prp}
\label{prp:CharLFD}
Let $S$ be a \CuSgp{} satisfying \axiomO{5}.
Then the following statements are equivalent:
\begin{enumerate}[{\rm (i)}]
\item
$S$ satisfies condition~(L) in \cref{dfn:FuncDiv}.
\item
For every $x' \ll x$ in $S$ and $k \in \NN$, there exists $y \in S$ such that
\[
ky \leq x, \andSep
\widehat{x'} \leq \left(k+1\right)\widehat{y}.
\]
\item
For every $x' \ll x$ in $S$ and every $\varepsilon \in (0,1)$ there exists $y \in S$ such that
\[
2y \leq x, \andSep
(1-\varepsilon)\widehat{x'} \leq 2\widehat{y}.
\]
\end{enumerate}
\end{prp}
\begin{proof}
Assuming~(i), let us verify~(ii).
For $k=0$, we use $y=x$.
For $k \geq 1$, we apply~(i) with $\varepsilon>0$ small enough such that $(1-\varepsilon)^{-1}k \leq k+1$.
Similarly, to see that~(ii) implies~(iii), for a given $\varepsilon \in (0,1)$ apply~(ii) with $2k$ where $k$ is large enough such that $1-\varepsilon \leq (1+\tfrac{1}{2k})^{-1}$.

\medskip

Next, assuming that~(iii) holds, let us verify~(i).
First, by induction over $m$, we verify that the following statement holds:
\begin{enumerate}
\item[(L$_{2^m}$)]
For every $x' \ll x$ in $S$ and $\varepsilon \in (0,1)$ there exists $y \in S$ such that
\[
2^my \leq x, \andSep
(1-\varepsilon)\widehat{x'} \leq 2^m\widehat{y}.
\]
\end{enumerate}
The case $m=1$ holds by assumption.
Assume that (L$_{2^m}$) holds for some $m \geq 1$.
To verify (L$_{2^{m+1}}$), let $x' \ll x$ in $S$ and $\varepsilon \in (0,1)$.
Pick $x'' \in S$ and $\delta>0$ such that
\[
x' \ll x'' \ll x, \andSep
(1-\delta)^3 = 1-\varepsilon.
\]
Applying (L$_{2^m}$) for $x'' \ll x$ and $\delta$, we obtain $w \in S$ such that
\[
2^mw \leq x, \andSep
(1-\delta)\widehat{x''} \leq 2^m\widehat{w}.
\]
By \cref{prp:wayBelowLFS}(i), we have $(1-\delta)^2\widehat{x'} \ll (1-\delta)\widehat{x''}$ in $\Lsc(F(S))$, hence also $(1-\delta)^2\widehat{x'} \ll 2^m \widehat{w}$. This allows us to choose $w' \in S$ such that
\[
w' \ll w, \andSep
(1-\delta)^2\widehat{x'} \leq 2^m\widehat{w'}.
\]

Next, applying (iii) for $w' \ll w$ and $\delta$, we obtain $y \in S$ such that
\[
2y \leq w, \andSep
(1-\delta)\widehat{w'} \leq 2\widehat{y}.
\]
Then
\[
2^{m+1}y
\leq 2^mw
\leq x, \andSep
(1-\varepsilon)\widehat{x'}
= (1-\delta)^3\widehat{x'}
\leq (1-\delta)2^m\widehat{w'}
\leq 2^{m+1}\widehat{y},
\]
which verifies (L$_{2^{m+1}}$).

\medskip

Now, to prove~(i), let $x' \ll x$ in $S$, let $k \in \NN$ with $k \geq 1$, and let $\varepsilon \in (0,1)$.
Pick $x'' \in S$ and $\delta>0$ such that
\[
x' \ll x'' \ll x, \andSep
(1-\delta)^2 = 1-\varepsilon.
\]

Using that dyadic rationals are dense in $\mathbb{R}$, choose $n,m \geq 1$ such that
\[
(1-\delta)\frac{1}{k}
\leq \frac{n}{2^m}
\leq \frac{1}{k}.
\]
Then
\[
(1-\delta)2^m \leq kn \leq 2^m.
\]

Applying (L$_{2^{m}}$) for $x'' \ll x$ and $\delta$, we obtain $w \in S$ such that
\[
2^mw \leq x, \andSep
(1-\delta)\widehat{x'} \leq 2^m\widehat{w}.
\]
Set $y := nw$.
Then
\[
ky
= k(nw)
\leq 2^mw
\leq x, \andSep
(1-\varepsilon)\widehat{x'}
= (1-\delta)^2\widehat{x'}
\leq (1-\delta)2^m\widehat{w}
\leq kn\widehat{w}
= k\widehat{y},
\]
which shows that $y$ has the desired properties.
\end{proof}

\begin{rmk}
\label{rmk:osti}
Let $S$ be a \CuSgp{} satisfying \axiomO{5}.
It is not clear if there exists a characterization of condition~(U) in \cref{dfn:FuncDiv}  analogous to \cref{prp:CharLFD}.
The reason is that condition~(U) has the additional assumption that the dividing element is dominated by $x$, which is automatic in condition~(L).
	
On the other hand, for $m \geq 1$ let us consider the following condition:
\begin{itemize}
\item[($U_{2^m}$)] 
For every $x' \ll x$ in $S$ and every $\varepsilon\in(0,1)$, there exists $y \in S$ such that
\[
y \leq x,\quad 
(1-\varepsilon)2^m\widehat{y} \leq \widehat{x}, \andSep 
x' \leq 2^my.
\]
\end{itemize}
An argument as in the proof of \cref{prp:CharLFD} shows that if $S$ satisfies ($U_2$), then it satisfies ($U_{2^m}$) for all $m$. (See also \cref{rmk:adapt}.)
\end{rmk}

To show that $n$-almost divisible Cuntz semigroups are functionally divisible (\cref{prp:FD-NAlmDiv}), we first need some preparatory technical results.
We will use the concept of $(2,\omega)$-divisibility (see also \cref{sec:GGP}).~Given $k \in \NN$ with $k \geq 2$, a \CuSgp{} $S$ is said to be \emph{$(k,\omega)$-divisible} if for all $x' \ll x$ in $S$ there exists $y \in S$ such that
\[
ky \leq x, \andSep x' \leq \infty y.
\]

If $S$ is $(k,\omega)$-divisible for some $k \geq 2$, then it is $(2,\omega)$-divisible.
The converse also holds by \cite[Lemma~3.4]{ThiVil23Glimm}.
Thus, a \CuSgp{} is $(2,\omega)$-divisible if and only if it is $(k,\omega)$-divisible for all $k \geq 2$.

Condition (L') in the result below was considered in \cite[Theorem~6.1]{RobTik17NucDimNonSimple}.

\begin{lma}
\label{prp:CharLFD-2Omega}
Let $S$ be a \CuSgp{} satisfying \axiomO{5}-\axiomO{7}. 
Then $S$ satisfies condition~(L) in \cref{dfn:FuncDiv} if and only if $S$ is $(2,\omega)$-divisible and satisfies
\begin{enumerate}
\item[(L')] 
For every $x' \ll x$ in $S$, every $k \in \NN$ with $k \geq 1$, and every $\varepsilon>0$, there exists $y\in S$ such that
\[
ky \leq x, \andSep
\widehat{x'} \leq k\widehat{y}+\varepsilon\widehat{x}.
\]
\end{enumerate}	
\end{lma}
\begin{proof}
To show the forward implication, assume that $S$ satisfies
\begin{itemize}
\item[(L)] 
For every $x' \ll x$ in $S$, every $k \in \NN \setminus \{0\}$, and every $\varepsilon \in (0,1)$, there exists $y \in S$ such that
\[
ky \leq x, \andSep
(1-\varepsilon)\widehat{x'} \leq k\widehat{y}.
\]
\end{itemize}
To verify that~$S$ is $(2,\omega)$-divisible, let $x' \ll x$ in $S$.
Applying (L) with the given $x',x$, and also $k=2$ and $\varepsilon=\tfrac{1}{2}$, we obtain $y \in S$ such that 
\[
2y \leq x, \andSep
\frac{1}{2}\widehat{x'} \leq 2\widehat{y}.
\]
The latter condition implies that $\widehat{x'} \leq 4\widehat{y}$, and hence $x'\leq \infty y$ (using, for example, \cite[Lemma~6.6]{AntPerRobThi22CuntzSR1}). 
This shows that~$S$ is $(2,\omega)$-divisible.

To verify (L'), let $x' \ll x$ in $S$, let $k \in \NN$ with $k \geq 1$, and $\varepsilon>0$.
Pick $\delta \in (0,1)$ such that $(1-\delta)^{-1}=1+\varepsilon$.
Applying (L) with the given $x',x$ and $k$, and also $\delta$, we obtain $y \in S$ such that 
\[
ky \leq x, \andSep
(1-\delta)\widehat{x'} \leq k\widehat{y}.
\]
Using that $ky \leq x$, we get
\[
\widehat{x'} 
\leq (1-\delta)^{-1}k\widehat{y}
= (1+\varepsilon)k\widehat{y}
\leq k\widehat{y} + \varepsilon\widehat{x},
\]
as desired.

\medskip

To show the backward implication, assume that $S$ is $(2,\omega)$-divisible and satisfies~(L').
We verify condition~(ii) of \cref{prp:CharLFD}.
Let $x' \ll x$ in $S$, and let $k\in\NN$.
We need to find $y\in S$ such that
\[
ky \leq x, \andSep 
\widehat{x'} \leq (k+1)\widehat{y}.
\]
	
For $k=0$ and $k=1$ use $y := x$. Thus, from now on we may assume that $k \geq 2$. 
	
Using the methods from \cite[Section~5]{ThiVil21DimCu2}, we find a countably based, $(2,\omega)$-divisible sub-\CuSgp{} $H \subseteq S$ that satisfies \axiomO{5}-\axiomO{7} and contains $x'$, $x$.
	
Choose $x'' \in H$ and $n \in \NN$ with $n \geq 1$ such that
\[
x' \ll x'' \ll x, \quad
k+1 \leq n, \andSep
\frac{k(k+1)+n}{n} \leq 1+\frac{1}{k}.
\]
	
Since $H$ is $(2,\omega)$-divisible, it is also $(n,\omega)$-divisible by the arguments in \cite[Lemma~3.4]{ThiVil23Glimm}.
Applied to $x'' \ll x$ and $n$, we obtain $c \in H$ such that
\[
nc \leq x, \andSep
x'' \ll \infty c.
\]
Using that $x'' \ll \infty c$, we can choose $c' \in H$ such that
\[
x'' \ll \infty c', \andSep
c' \ll c.
\]
	
We have $(k+1)c \leq nc \leq x$. 
Using \cref{prp:DivO5}, we obtain $d \in H$ such that
\[
kc' + d \leq x \leq kc + d, \andSep
x \leq (k+1)d.
\]
	
Thus, one has
\[
nx''
\leq nx
\leq nkc + nd
\leq kx + nd
\leq (k(k+1)+n)d.
\]
	
Since $H$ is countably based and satisfies \axiomO{5}-\axiomO{7}, we may apply \cref{thm:O7}. 
Hence, for every $s \in H$ the infimum $s \wedge \infty c'$ exists (in $H$), and the map $s \mapsto s \wedge \infty c'$ is additive.
Set $e := d \wedge \infty c'$.
Since $x'' \ll \infty c'$, we have 
\begin{align*}
nx' 
\ll nx'' 
= (nx'') \wedge (\infty c')
&\leq (k(k+1)+n)d \wedge (\infty c') \\
&= \big( k(k+1)+n \big) \big( d \wedge (\infty c') \big)
= (k(k+1)+n)e.
\end{align*}
	
This allows us to find $e' \in H$ satisfying
\[
nx' \leq (k(k+1)+n)e', \andSep
e' \ll e.
\]
Let us further choose $e'' \in H$ such that $e'\ll e''\ll e$. 
	
Since $e\leq\infty c'$ and $e'\ll e$, we may pick $N\in\NN$ with $N \geq 1$ such that $e''\leq Nc'$. 
Then choose $\varepsilon>0$ small enough such that
\[
\left( 1+\frac{1}{k}\right) \varepsilon\leq \frac{1}{N}.
\]

Now, working again in $S$, and applying~(L') to $e' \ll e''$, $k$ and $\varepsilon$, we obtain $f \in S$ such that
\[
kf \leq e'', \andSep
\widehat{e'} \leq k\widehat{f} + \varepsilon\widehat{e''}.
\]
	
Set $y := c'+f$.
We have $kf \leq e \leq d$ and therefore
\[
ky
= kc' + kf
\leq kc' + d
\leq x.
\]
Further, we have
\begin{align*}
\widehat{x'}
&\leq \frac{k(k+1)+n}{n} \widehat{e'}
\leq  \left( 1+\frac{1}{k}\right)\widehat{e'} 
\leq \left( 1+\frac{1}{k}\right)\left( k\widehat{f} + \varepsilon\widehat{e''} \right) \\
&\leq (k+1)\widehat{f} + \frac{1}{N}\widehat{e''} 
\leq (k+1)\widehat{f} + \widehat{c'}
\leq (k+1)\left(\widehat{f} + \widehat{c'}\right)
= (k+1)\widehat{y},
\end{align*}
as desired.
\end{proof}

The next result shows that if a \emph{compact} element satisfies condition~(L') in \cref{prp:CharLFD-2Omega}, then it also satisfies condition~(U) in \cref{dfn:FuncDiv}.

\begin{lma}
\label{prp:CpctLFD}
Let $S$ be a \CuSgp{} satisfying \axiomO{5}, and let $x\in S$ be a compact element such that for every $k \in \NN$ with $k \geq 1$, and every $\varepsilon>0$, there exists $y \in S$ such that
\[
ky \leq x, \andSep
\widehat{x} \leq k\widehat{y}+\varepsilon\widehat{x}.
\]
	
Then, for every $k \in \NN$ with $k \geq 1$, and every $\delta>0$, there exists $z \in S$ such that
\[
z \leq x,\quad 
(1-\delta)k\widehat{z} \leq \widehat{x}, \andSep 
x \leq kz.
\]
\end{lma}
\begin{proof}
Given $k \in \NN$ with $k \geq 1$ and $\delta>0$, pick $\varepsilon>0$ such that
\[
1 + 2(k-1)\varepsilon \leq (1-\delta)^{-1}.
\]

Applying the assumption for $k$ and $\varepsilon$, we obtain $y \in S$ such that 
\[
ky \leq x, \andSep 
\widehat{x} \leq k\widehat{y}+\varepsilon\widehat{x}.
\]

Now, using that $x\ll x$, it follows from \cref{prp:wayBelowLFS}(i) that 
\[
(1-\varepsilon)\widehat{x}
\ll \widehat{x} 
\leq k\widehat{y}+\varepsilon\widehat{x}.
\]
This allows us to choose $y' \in S$ such that $y' \ll y$ and 
\[
(1-\varepsilon)\widehat{x} 
\leq k\widehat{y'} + \varepsilon\widehat{x}.
\]

Applying \cref{prp:DivO5} for $ky \leq x$ and $y' \ll y$, we get $z \in S$ such that 
\begin{equation*}
\label{eq2}
(k-1)y'+z \leq x \leq kz.
\end{equation*}
We now show that $z$ has the claimed properties.
It is clear that $z \leq x$ and $x \leq kz$.
It remains to verify that  $(1-\delta)k\lambda(z) \leq \lambda(x)$ for every $\lambda \in F(S)$.
This is clear if $\lambda(x)=\infty$.
Let $\lambda \in F(S)$ with $\lambda(x)<\infty$. 
From 
the fact that $(1-\varepsilon)\widehat{x} 
\leq k\widehat{y'} + \varepsilon\widehat{x}$, as shown above, we deduce that
\[
(1-2\varepsilon)\lambda(x) \leq k\lambda(y').
\]

Since $ky\leq x$, it follows that $\lambda (y')<\infty$ and, consequently, using our choice of $\varepsilon$ in the last step,
\begin{align*}
k\lambda(z) 
&\leq k\lambda(x) - k(k-1)\lambda(y')
\leq k\lambda(x) - (1-2\varepsilon)(k-1)\lambda(x) \\
&= (1+2(k-1)\varepsilon)\lambda(x)
\leq (1-\delta)^{-1}\lambda(x),
\end{align*}
as desired.
\end{proof}

\begin{prp}
\label{prp:FD-NAlmDiv}
If a \ca{} is $n$-almost divisible for some $n\in\NN$, then it is functionally divisible.
\end{prp}
\begin{proof}
Let $n \in \NN$, and let $A$ be a \ca{} that is $n$-almost divisible.
It is easy to deduce that $\Cu(A)$ is $(2,\omega)$-divisible.
Therefore, by \cref{prp:CharLFD-2Omega}, to show that $\Cu(A)$ is functionally divisible it suffices to verify condition (L') in \cref{prp:CharLFD-2Omega} and condition~(U) in \cref{dfn:FuncDiv}. 

Let $x' \ll x$ in $\Cu(A)$, let $k \in \NN$ with $k \geq 1$, and let $\varepsilon>0$.
We may assume that $\varepsilon=\tfrac{1}{N}$ for some integer $N\geq 1$, and we need to find $y \in \Cu(A)$ such that
\[
ky \leq x, \andSep
N\widehat{x'} \leq Nk\widehat{y} + \widehat{x},
\]
and $z \in \Cu(A)$ such that
\[
z \leq x, \quad
(N-1)k\widehat{z} \leq N\widehat{x}, \andSep
x' \leq kz.
\]
Without loss of generality, we may assume that $A$ is stable, which allows us to pick $a \in A_+$ with $x=[a]$.

Let $\mathcal{U}$ be a free ultrafilter on $\NN$. Denote by $A_{\mathcal{U}}$ the free ultrapower of $A$. Set $C=\{ a\}'\cap A_{\mathcal{U}}$ and $I=\{ a\}^{\perp}\cap A_{\mathcal{U}}$. 
Then, it follows from \cite[Theorem~6.1]{RobTik17NucDimNonSimple} that the class of the unit $[1]$ in $\Cu(C/I)$ satisfies condition~(L') in \cref{prp:CharLFD-2Omega}, that is, for every $l \in \NN$ with $l \geq 1$, and every $\delta>0$, there exists $v \in \Cu(C/I)$ such that
\[
lv \leq [1], \andSep
\widehat{[1]} \leq l\widehat{v}+\delta\widehat{[1]}.
\]
Applied for $l=k$ and $\delta=\tfrac{1}{2N}$, we obtain $v \in \Cu(C/I)$ such that
\[
kv \leq [1],\andSep 
\widehat{[1]} \leq k\widehat{v} + \frac{1}{2N}\widehat{[1]}.
\]
Adding $\tfrac{1}{2N}\widehat{[1]}$ and multiplying by $N$, we get
\[
(1+\frac{1}{2N})N\widehat{[1]} 
\leq Nk\widehat{v} + \widehat{[1]}.
\]
Now we apply \cref{prp:wayBelowLFS}(ii) to find a positive integer $s$ such that
\[
sN[1] 
\leq (s+1)N[1] 
\leq s(Nkv+[1]).
\]
Since $sN[1]$ is compact, we can pick $v' \in \Cu(C/I)$ such that
\[
v' \ll v, \andSep
sN[1] \leq s(Nkv'+[1]).
\]
Using also that $v \leq [1]$ (since $kv \leq [1]$), we can apply \cite[Lemma~2.3~(ii)]{RobRor13Divisibility} to find $e \in (C/I)_+$ such that $v' \leq [e] \leq v$, and so
\[
k[e] \leq [1],\andSep 
sN[1] \leq s(Nk[e]+[1]).
\]

Separately, and using again that $[1]\in\Cu(C/I)$ satisfies condition (L'), we apply \cref{prp:CpctLFD} to the compact element $[1]$ in $\Cu(C/I)$ and for the given $k$ and $\tfrac{1}{2N}$ to obtain $w \in \Cu(C/I)$ such that
\[
w \leq [1],\quad 
(1-\tfrac{1}{2N})k\widehat{w} \leq \widehat{[1]}, \andSep 
[1] \leq kw.
\]
Multiplying by $N$, we get
\[
(N-\tfrac{1}{2})k\widehat{w} \leq N\widehat{[1]}.
\]
Using that $[1]$ is compact and $[1]\leq kw$, we find $w'',w' \in \Cu(C/I)$ such that
\[
w'' \ll w' \ll w, \andSep
[1] \leq kw''.
\] 
First, applying \cref{prp:wayBelowLFS}(ii), we find a positive integer $t$ such that
\[
t(N-1)kw'
\leq (t+1)(N-1)kw' 
\leq tN[1].
\]
Then, arguing as above, we find $f \in (C/I)_+$ with $w'' \leq [f] \leq w'$, and thus
\[
[f] \leq [1],\quad 
t(N-1)k[f] \leq tN[1], \andSep
[1] \leq k[f].
\]

\medskip

Let $\bar{e},\bar{f}\in C_+$ be lifts of $e,f$ respectively. Then, there exists $z\in \Cu(I)$ such that 
\[
k[\bar{e}] \leq [1]+z,\quad 
sN[1] \leq s(Nk[\bar{e}]+[1])+z,
\]
and
\[
[\bar{f}] \leq [1]+z,\quad 
t(N-1)k[\bar{f}] \leq tN[1]+z,\quad
[1] \leq k[\bar{f}]+z
\]
in $\Cu (C)$.
	
Set $b=a\bar{e}$ and $c=a\bar{f}$ in $A_{\mathcal{U}}$. 
Using that every element in $C$ commutes with $a$ (in particular, those elements applying the Cuntz subequivalences above) and that every element in $I$ is orthogonal to $a$, one obtains
\[
k[b]\leq [a],\quad 
sN[a] \leq s(Nk[b]+[a]),
\]
and
\[
[c] \leq [a],\quad 
t(N-1)k[c] \leq tN[a],\quad
[a] \leq k[c]
\]
in $\Cu (A_{\mathcal{U}})$.

A finite collection of Cuntz subequivalences in $A_{\mathcal{U}}$ can be `lifted' to $A$ simultaneously up to a cut-down (see, for example, the proof of \cite[Theorem~10.5]{AntPerRobThi24TracesUltra}). Thus, using that $x' \ll x=[a]$, we find $y,z \in \Cu(A)$ such that
\[
ky \leq x,\quad 
sNx' \leq s(Nky+x),
\]
and 
\[
z \leq x,\quad 
t(N-1)kz \leq tNx,\quad
x' \leq kz
\]
in $\Cu(A)$.
Passing to functionals, we can remove the multiplication by $s$ and $t$, and we obtain $N\widehat{x'} \leq Nk\widehat{y} + \widehat{x}$ and $(N-1)k\widehat{z} \leq N\widehat{x}$, which shows that $y$ and $z$ have the desired properties.
\end{proof}

It was proved in \cite[Theorem~3.8]{AntPerRobThi22CuntzSR1} that, if $A$ is a separable \ca{} of stable rank one, then $\Cu(A)$ is an inf-semilattice ordered semigroup. 
That is, for each $x,y\in \Cu(A)$ their greatest lower bound $x\wedge y$ exists in $\Cu(A)$ and addition is distributive over the meet operation:
\[
(x\wedge y)+z=(x+z) \wedge (y+z)
\]
for all $x,y,z\in \Cu(A)$. 
This will be used below.

\begin{prp}
\label{prp:FD-SR1} 	
Every nowhere scattered \ca{} with stable rank one is functionally divisible.
\end{prp}
\begin{proof}
Let us first assume that every separable, nowhere scattered \ca{} with stable rank one is functionally divisible, and let $A$ be an arbitrary nowhere scattered \ca{} with stable rank one.
To show that $A$ is functionally divisible, let $x' \ll x$ in $\Cu(A)$, let $k \in \NN$ with $k \geq 1$ and let $\varepsilon \in (0,1)$ be given.
We need to find $y,z \in \Cu(A)$ such that
\[
ky \leq x, \quad
(1-\varepsilon)\widehat{x'} \leq k\widehat{y}, \quad
z \leq x, \quad 
(1-\varepsilon)k\widehat{z} \leq \widehat{x}, \andSep 
x' \leq kz.
\]
Using that stable rank one and nowhere scatteredness satisfy the L{\"o}wenheim-Skolem condition (\cite[Proposition~4.11]{ThiVil24NowhereScattered}) and applying \cite[Proposition~6.1]{ThiVil21DimCu2}, we can find a separable sub-\ca{} $B \subseteq A$ such that $B$ is nowhere scattered, has stable rank one, and such that the inclusion $B \to A$ induces an order-embedding $\Cu(B) \to \Cu(A)$ whose image contains $x'$ and $x$.
Viewing $x'$ and $x$ as elements in $\Cu(B)$, and using that $B$ is functionally divisible, we find $y$ and $z$ with the desired properties in $\Cu(B)$, and thus in $\Cu (A)$. (See also \cite[Lemma 9.2]{AntPerRobThi22CuntzSR1}.)

\medskip

By the argument above, we may assume that $A$ is a separable, nowhere scattered \ca{} $A$ with stable rank one, and we need to show that $A$ is functionally divisible. 
We first verify condition~(iii) in \cref{prp:CharLFD}, which then implies that $\Cu(A)$ satisfies condition~(L).

Let $x' \ll x$ in $\Cu(A)$, and let $\varepsilon>0$.
We need to find $y \in \Cu(A)$ such that
\[
2y \leq x, \andSep
(1-\varepsilon)\widehat{x'} \leq 2 \widehat{y}.
\]

Using the solution to the rank problem (see Theorem \ref{thm:realization-ranks}) 
we can find $u \in \Cu(A)$ such that $\widehat{u} = \tfrac{1}{2} \widehat{x}$.
Set $v := u \wedge x$, which exists by the comments preceding this proposition.~Using \cite[Theorem~6.12]{AntPerRobThi22CuntzSR1} at the second step, we have
\[
\widehat{v}
= \widehat{u \wedge x}
= \widehat{u} \wedge \widehat{x}
= \frac{1}{2} \widehat{x}.
\]
Using that $\Cu(A)$ is $(2,\omega)$-divisible by the results in 
\cite[Section~5]{AntPerRobThi22CuntzSR1} (see also Theorem~7.1 and Proposition~7.3 in \cite{ThiVil23Glimm}), we can apply \cite[Theorem~5.10]{AsaThiVil23arX:RksSoftOps} to find a soft element $w \in \Cu(A)$ such that
\[
w \leq v, \andSep
\widehat{w} = \widehat{v}.
\]
By \cref{prp:wayBelowLFS}(i), we have
\[
(1-\varepsilon)\frac{1}{2}\widehat{x'} \ll \frac{1}{2}\widehat{x} = \widehat{w},
\]
which allows us to choose $w'',w' \in \Cu(A)$ such that
\[
w'' \ll w' \ll w, \andSep
(1-\varepsilon)\frac{1}{2}\widehat{x'} \leq \widehat{w''}.
\]
Since $w$ is soft, there exists by \cite[Proposition 4.6]{ThiVil24SoftOps} some $t \in \Cu(A)$ such that
\[
w' + t \ll w, \andSep
w' \ll \infty t.
\]
In particular, $w'+t\leq v\leq x$.

Choose $t' \in \Cu(A)$ such that
\[
t' \ll t, \andSep
w' \ll \infty t'.
\]

Applying \axiomO{5} for $w' + t \leq x$ and $w'' \ll w'$ and $t' \ll t$, we obtain $c \in \Cu(A)$ such that
\[
w'' + c \leq x \leq w' + c, \andSep
t' \ll c.
\]

Let us show that $\tfrac{1}{2} \widehat{x} \leq \widehat{c}$.~Since $w'\ll \infty t'$, there is $m\geq 1$ such that $w'\leq mt'$, and thus $x\leq w'+c\leq mt'+t'=(m+1)t'$.
Therefore, if $\lambda \in F(\Cu(A))$ satisfies $\lambda(x)=\infty$, then $\lambda(c)=\infty \geq \tfrac{1}{2}\lambda(x)$.
On the other hand, if $\lambda \in F(\Cu(A))$ satisfies $\lambda(x)<\infty$, then using that $w'\leq w$ and $\widehat{w} = \tfrac{1}{2}\widehat{x}$, we get
\[
\lambda(c) 
\geq \lambda(x) - \lambda(w') 
\geq \lambda(x) - \frac{1}{2}\lambda(x)
= \frac{1}{2}\lambda(x).
\]

Set $y := w'' \wedge c$.
Using that $(1-\varepsilon)\tfrac{1}{2}\widehat{x'} \leq \widehat{w''}$ and $\tfrac{1}{2} \widehat{x} \leq \widehat{c}$, and using \cite[Theorem~6.12]{AntPerRobThi22CuntzSR1} at the second step, we have
\[
(1-\varepsilon)\tfrac{1}{2}\widehat{x'}
\leq \widehat{w''} \wedge \widehat{c}
= \widehat{w'' \wedge c}
= \widehat{y},
\]
and thus 
\[
2y \leq w'' + c \leq x, \andSep
(1-\varepsilon)\widehat{x'} 
\leq 2 \widehat{y},
\]
as desired.

\medskip

Next, to verify condition~(U) from \cref{dfn:FuncDiv}, let $x' \ll x$ in $\Cu(A)$, let $k \in \NN$ with $k \geq 1$, and let $\varepsilon \in (0,1)$.
We need to find $z \in S$ such that
\[
z \leq x,\quad 
(1-\varepsilon)k\widehat{z} \leq \widehat{x}, \andSep 
x' \leq kz.
\]

Set $s:=(1-\varepsilon)^{-1}\tfrac{1}{k}$.
If $s \geq 1$ (that is, if $(1-\varepsilon)k \leq 1$), then we can use $z :=x$.
Thus, we may assume that $s < 1$.
For $f \in L(F(\Cu(A)))$, let $\alpha(f) \in \Cu(A)$ be as in \cref{thm:realization-ranks}, hence $\widehat{\alpha(f)}=f$. 
Set
\[
z := x \wedge \alpha\left( s\widehat{x} \right).
\]
Then $z \leq x$.
Further, using that $s < 1$, and using \cite[Theorem~6.12]{AntPerRobThi22CuntzSR1}, we have
\[
\widehat{z} 
= \widehat{x} \wedge s\widehat{x}
= s\widehat{x}
= \frac{(1-\varepsilon)^{-1}}{k}\widehat{x}
\]
and thus $(1-\varepsilon)k\widehat{z} = \widehat{x}$.

Finally, let us verify that $x' \leq kz$. 
In fact we will see that $x \leq kz$.
Since $\Cu(A)$ is inf-semilattice ordered, we have
\[
kz = k \big( x \wedge \alpha(s\widehat{x}) \big) = \bigwedge_{i=0}^{k} \big(ix+(k-i)\alpha(s\widehat{x})\big).
\] 
Clearly $x\leq ix+(k-i)\alpha(s\widehat{x})$ when $i>0$. For $i=0$, first note that $ks-1 > 0$ and therefore $\widehat{x} \leq \infty (ks-1)\widehat{x}$, which allows us to apply the  partial additivity of $\alpha$ stated in Theorem \ref{thm:realization-ranks}. Hence, we get
\[
x 
\leq x + \alpha\big( (ks-1)\widehat{x} \big)
= \alpha\big( \widehat{x}+(ks-1)\widehat{x} \big)
= \alpha(ks\widehat{x})
= k \alpha(s\widehat{x}),
\]
and thus $x\leq kz$, as desired. 
\end{proof}

\begin{prp}
\label{prp:FD-Monotrace} 	
Let $A$ be a simple, unital, non-elementary \ca{} with a unique normalized quasitrace.
Then $\Cu(A)$ is functionally divisible.
\end{prp}
\begin{proof}
By assumption, there exists a unique functional $\lambda \colon \Cu(A) \to [0,\infty]$ satisfying $\lambda([1])=1$.
Note that any two elements $x,y \in \Cu(A)$ satisfy $\widehat{x} \leq \widehat{y}$ if and only if $\lambda(x) \leq \lambda(y)$.

We first verify condition~(iii) in \cref{prp:CharLFD}, which then implies that $\Cu(A)$ satisfies condition~(L) in \cref{dfn:FuncDiv}.
Let $x' \ll x$ in $\Cu(A)$, and let $\varepsilon\in(0,1)$.
We need to find $y \in \Cu(A)$ such that
\[
2y \leq x, \andSep
(1-\varepsilon)\lambda(x') \leq 2 \lambda(y).
\]

If $x'=0$, then we can use $y=0$.
Therefore, we may from now on assume that $x' \neq 0$.
Then $\lambda(x') \in (0,\infty)$.
Since $A$ is non-elementary, we can pick $v \in \Cu(A)$ such that $\lambda(v) = \tfrac{1}{2} \lambda(x')$.
We have
\[
(1-\varepsilon)\frac{1}{2}\lambda(x') 
< \frac{1}{2}\lambda(x') 
\leq \lambda(x), \lambda(v).
\]

Using that $\Cu(A)$ satisfies Edwards' condition for $\lambda$ (see Theorem~4.7 and Remark~4.2(3) in \cite{Thi20RksOps}, and \cite{AntPerRobThi21Edwards}), we obtain $w \in \Cu(A)$ such that
\[
(1-\varepsilon)\frac{1}{2}\lambda(x') < \lambda(w), \andSep
w \leq v, x.
\]
Choose $w'',w' \in \Cu(A)$ such that
\[
(1-\varepsilon)\frac{1}{2}\lambda(x')  < \lambda(w''), \andSep
w'' \ll w' \ll w.
\]
Applying \axiomO{5} for $w''\ll w'\leq x$, we obtain $c \in \Cu(A)$ such that
\[
w'' + c \leq x \leq w' + c.
\]

Using that $\lambda(x') \leq  \lambda(x)$ and $\lambda(w') \leq \lambda(w) \leq \lambda(v) = \tfrac{1}{2}\lambda(x')$, we get
\[
\frac{1}{2}\lambda(x') 
= \lambda(x') - \frac{1}{2}\lambda(x') 
\leq \lambda(x) - \lambda(w') 
\leq \lambda(c).
\]

Then
\[
(1-\varepsilon)\frac{1}{2}\lambda(x') < \lambda(w''), \andSep
(1-\varepsilon)\frac{1}{2}\lambda(x') < \frac{1}{2}\lambda(x') \leq \lambda(c).
\]
Applying Edwards' condition again, we obtain $y \in \Cu(A)$ such that
\[
(1-\varepsilon)\frac{1}{2}\lambda(x') < \lambda(y), \andSep
y \leq w'', c.
\]
Then $2y\leq w''+c\leq x$. 
Thus $y$ has the desired properties.

\medskip

Next, to verify condition~(U) of \cref{dfn:FuncDiv}, let $x' \ll x$ in $\Cu(A)$, let $k \in \NN$ with $k \geq 1$, and let $\varepsilon\in(0,1)$.
We need to find $z \in \Cu(A)$ such that
\[
z \leq x,\quad 
(1-\varepsilon)k\lambda(z) \leq \lambda(x), \andSep 
x' \leq kz.
\]
Choose $\delta>0$ such that
\[
1+(2k-1)\delta 
\leq (1-\varepsilon)^{-1}.
\]
Then pick $v'',v',v \in \Cu(A)$ such that
\[
x' \ll v'' \ll v' \ll v \ll x, \andSep
\lambda(v) \leq (1+\delta)\lambda(v'').
\]
We have already verified condition~(L) of \cref{dfn:FuncDiv}.
Applied for $v' \ll v$, for~$k$ and for~$\tfrac{\delta}{2}$, we obtain $y \in \Cu(A)$ such that
\[
ky \leq v, \andSep
\left( 1-\frac{\delta}{2} \right)\lambda(v') \leq k\lambda(y).
\]

By \cref{prp:wayBelowLFS}(i), we have $(1-\delta)\widehat{v''} \ll (1-\tfrac{\delta}{2})\widehat{v'}$ in $\Lsc(F(\Cu(A)))$, which allows us to choose $y' \in \Cu(A)$ such that
\[
y' \ll y, \andSep
(1-\delta)\lambda(v'') \leq k\lambda(y').
\]
Applying \cref{prp:DivO5} for $ky \leq v$ and $y' \ll y$, we obtain $z \in \Cu(A)$ such that
\[
(k-1)y' + z \leq v \leq (k-1)y + z, \andSep
v \leq kz.
\]
Using that $\lambda(y')$ is finite, we get
\begin{align*}
k\lambda(z) 
&\leq k\lambda(v) - (k-1)k\lambda(y') \\
&\leq k(1+\delta)\lambda(v'') - (k-1)(1-\delta)\lambda(v'') \\
&= (1+(2k-1)\delta)\lambda(v'')
\leq (1-\varepsilon)^{-1}\lambda(v''),
\end{align*}
and thus
\[
(1-\varepsilon)k\lambda(z) 
\leq \lambda(v'')
\leq \lambda(x).
\]
We further have 
\[
z \leq v \leq x, \andSep
x' \leq v \leq kz,
\]
which shows that $z$ has the claimed properties.
\end{proof}

\section{Pure C*-algebras}
\label{sec:pure}

This section is devoted to proving \cref{ThmB,ThmIntro:CharPure}. 
We actually show that a \ca{} is pure if, and only if, it has controlled comparison and is functionally divisible; if, and only if, it is $(m,n)$-pure for some $m$ and $n$. 
Building on the results from \cref{sec:divisibility} concerning automatic functional divisibility, we show that a \ca{} with controlled comparison is pure if it is either nowhere scattered with real rank zero or stable rank one, or also if it is simple, unital, non-elementary with a unique normalized quasitrace.

\begin{dfn}
\label{dfn:pure}
Given $m,n \in \NN$, we say that a \CuSgp{} is \emph{$(m,n)$-pure} if it has $m$-comparison	and is $n$-almost divisible.
We say that a \CuSgp{} is \emph{pure} if it is $(0,0)$-pure.
	
A \ca{} is \emph{$(m,n)$-pure} if its Cuntz semigroup is.
Similarly, a \ca{} is \emph{pure} if it is $(0,0)$-pure.
\end{dfn}

\begin{prp}
\label{prp:ZStablePure}
Every $\mathcal{Z}$-stable \ca{} is pure.
Every pure \ca{} has the Global Glimm Property and strict comparison (of positive elements by quasitraces).
\end{prp}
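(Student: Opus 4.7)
The plan is to split \cref{prp:ZStablePure} into its two implications and treat each one by appealing to results that are already well established in the literature and essentially recalled in the preceding discussion of the paper.

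For the first implication, that every $\mathcal{Z}$-stable \ca{} $A$ is pure, I would need to verify two things for $\Cu(A)$: almost unperforation and almost divisibility. Almost unperforation of $\Cu(A)$ whenever $A \cong A \otimes \mathcal{Z}$ is exactly the content of R{\o}rdam's theorem in \cite{Ror04StableRealRankZ}; this is also the source of the (well-known) implication $(2)\Rightarrow(3)$ in the Toms--Winter conjecture mentioned in the introduction. For almost divisibility, I would use the fact that the Jiang--Su algebra $\mathcal{Z}$ contains, for each $n$, a unital copy of the prime dimension drop algebra $Z_{n,n+1}$, whose Cuntz semigroup realizes the required $(k,0)$-divisibility at the class of the unit. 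Tensoring with $\mathcal{Z}$ and passing to the (stabilized) \ca{} $A \cong A \otimes \mathcal{Z}$, any positive element can be conjugated into approximately orthogonal pieces coming from a unital copy of $Z_{k,k+1}$ inside $\mathcal{Z}$, which yields the $(k,0)$-divisibility required by \cref{dfn:n-divisibility}. This was already observed by Winter in \cite{Win12NuclDimZstable}, and can also be extracted from combining almost unperforation with the equivalent characterization of almost divisibility given in Remark~(2) after \cref{dfn:n-divisibility}.

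For the second implication, that every pure \ca{} $A$ has strict comparison of positive elements, the argument is essentially immediate from the definitions. Pureness of $A$ means that $\Cu(A)$ is both almost unperforated and almost divisible; in particular $\Cu(A)$ is almost unperforated. By the characterization cited immediately after \cref{dfn:pure} (namely \cite[Proposition~6.2]{EllRobSan11Cone}), a \CuSgp{} has strict comparison in the sense of \cref{dfn:pure} if and only if it is almost unperforated, so strict comparison of $\Cu(A)$ follows at once.

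I do not expect any genuine obstacle: both implications amount to organizing already-published results. The only minor point of care is making sure the almost divisibility half of the first implication is cited in a form that applies to arbitrary (possibly non-simple, non-unital, non-separable) $\mathcal{Z}$-stable \ca{s}; if a direct reference for that level of generality is not convenient, I would give a short self-contained argument using the dimension drop subalgebras of $\mathcal{Z}$ and the characterization of almost divisibility in almost unperforated \CuSgp{s} from \cite[Proposition~7.3.7]{AntPerThi18TensorProdCu}, namely that $ky \leq x \leq (k+1)y$ suffices to witness $(k,0)$-divisibility once almost unperforation is in hand.
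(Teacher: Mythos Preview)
Your proposal is correct and follows essentially the same approach as the paper: both implications are handled by citation, with R{\o}rdam's result \cite{Ror04StableRealRankZ} giving almost unperforation, the dimension-drop structure of $\mathcal{Z}$ giving almost divisibility, and \cite[Proposition~6.2]{EllRobSan11Cone} giving the equivalence of almost unperforation with strict comparison. The only cosmetic difference is that for almost divisibility the paper points to \cite[Theorem~5.35]{AraPerTom11Cu} rather than to \cite{Win12NuclDimZstable} or a self-contained dimension-drop argument, but the underlying content is the same.
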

\begin{proof}
Let $A$ be a $\mathcal{Z}$-stable \ca.
Then the (classical, uncomplete) Cuntz semigroup $W(A)$ is almost unperforated by \cite[Theorem~4.5]{Ror04StableRealRankZ}. 
Further, by \cite[Theorem~5.35]{AraPerTom11Cu}, for every $x \in W(A)$ and every $k \geq 1$ there exists $y \in W(A)$ such that $ky \leq x \leq (k+1)y$.
Using that $W(A)$ is order-dense in $\Cu(A)$ (see \cite[Theorem~3.2.8]{AntPerThi18TensorProdCu}), it follows that $\Cu(A)$ is almost unperforated and almost divisible (see also \cref{rmk:n-divisibility}), and thus pure.
	
By definition, every pure \ca{} is almost divisible, and therefore $(2,\omega)$-divisible, which by \cite[Theorem~3.6]{ThiVil23Glimm} is equivalent to the Global Glimm Property.
Further, every pure \ca{} has $0$-comparison, and thus enjoys strict comparison as noted in \cref{rmk:str-comparison}.
\end{proof}

One can ask whether, for  \CuSgp{s}, $(0,0)$-pureness agrees with some sort of tensorial absorption. This was explored in \cite{AntPerThi18TensorProdCu}, together with an extensive analysis of the tensor product in the category $\Cu$.

\begin{thm}[{\cite[Theorem~7.3.11]{AntPerThi18TensorProdCu}}]
\label{prp:PureCuZMult}
A \CuSgp{} $S$ is pure if and only if $S \cong \Cu(\mathcal{Z}) \otimes S$.
\end{thm}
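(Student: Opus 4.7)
The plan is to prove each implication separately, using the structural features of $\Cu(\mathcal{Z})$ as a $\Cu$-semiring and the tensor product machinery for $\Cu$-semigroups developed in \cite{AntPerThi18TensorProdCu}.

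For the backward direction, I would first observe that $\Cu(\mathcal{Z})$ itself is pure: since $\mathcal{Z}$ is trivially $\mathcal{Z}$-stable, \cref{prp:ZStablePure} gives almost unperforation, and almost divisibility follows either from the same proposition or directly from the well-known description of $\Cu(\mathcal{Z})$ as a disjoint union of $\NN$ and a soft part $(0,\infty]$. The key general principle is that tensoring with a pure $\Cu$-semiring $R$ transfers pureness to $R \otimes T$ for any $\Cu$-semigroup $T$: multiplication by "fractional scalars" in $R$ provides witnesses for almost divisibility in $R \otimes T$, and the universal comparison in $R$ gives almost unperforation. Applied with $R = \Cu(\mathcal{Z})$ and $T = S$, this transfers pureness along the assumed isomorphism $S \cong \Cu(\mathcal{Z}) \otimes S$.

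For the forward direction, recall that the unit for the tensor product in $\Cu$ is $\Cu(\CC) = \overline{\NN}$, so canonically $S \cong \overline{\NN} \otimes S$. The inclusion $\overline{\NN} \hookrightarrow \Cu(\mathcal{Z})$ is a $\Cu$-semiring morphism and induces a natural \CuMor{}
\[
\iota : S \cong \overline{\NN} \otimes S \longrightarrow \Cu(\mathcal{Z}) \otimes S.
\]
The plan is to build a $\Cu$-morphism $\pi : \Cu(\mathcal{Z}) \otimes S \to S$ inverse to $\iota$. On simple tensors, set $\pi(n \otimes x) = nx$ for $n \in \NN$, and for a soft element $r \in (0,\infty]$ written as $r = \sup\{ p/q : p/q < r\}$, define
\[
\pi(r \otimes x) = \sup \bigl\{ y \in S : qy \leq px \text{ for some } p,q \in \NN \text{ with } p/q < r \bigr\}.
\]
Almost divisibility of $S$ ensures that enough witnesses $y$ exist to make the supremum meaningful, and almost unperforation (in its Cuntz-semigroup incarnation via $<_s$) ensures the definition is independent of the approximating fraction $p/q$.

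The main obstacle is verifying that $\pi$ is well defined, additive, order-preserving, and continuous for suprema of increasing sequences, and that $\pi \circ \iota = \id_S$ and $\iota \circ \pi = \id_{\Cu(\mathcal{Z}) \otimes S}$. Well-definedness reduces to checking that the bilinear assignment respects the defining relations of the $\Cu$-tensor product, which requires combining the explicit description of $\Cu(\mathcal{Z})$ with axioms \axiomO{1}-\axiomO{5} of $S$. The two composite identities ultimately reduce, on simple tensors, to approximating any $x \in S$ by "fractional multiples" of itself: almost divisibility supplies elements $y$ with $qy \ll x \leq (q+1)y$, while almost unperforation guarantees that the comparison $q_1/p_1 \leq q_2/p_2$ among fractions is faithfully mirrored by the corresponding comparisons $p_2 q_1 \cdot y_1 \leq p_1 q_2 \cdot y_2$ in $S$. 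It is precisely this interplay that makes $(0,0)$-pureness both necessary and sufficient, and the delicate bookkeeping of $\ll$-increasing approximations in the two tensor factors is the technical heart of the argument.
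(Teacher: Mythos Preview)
The paper does not supply its own proof of this statement: \cref{prp:PureCuZMult} is stated with a bracketed citation to \cite[Theorem~7.3.11]{AntPerThi18TensorProdCu} and no proof environment follows. So there is nothing in the present paper to compare your attempt against.

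That said, your sketch is broadly in line with the argument in the cited source. There the result is phrased via the notion of a \emph{$\Cu(\mathcal{Z})$-multiplication} on $S$, namely a \CuMor{} $\Cu(\mathcal{Z})\otimes S\to S$ splitting the canonical map $S\to\Cu(\mathcal{Z})\otimes S$; one shows that pureness is equivalent to the existence of such a multiplication, which in turn is equivalent to $S\cong\Cu(\mathcal{Z})\otimes S$. Your map $\pi$ is exactly an attempt to build this multiplication directly, and your heuristic that almost divisibility supplies the candidate ``fractional multiples'' while almost unperforation forces the necessary comparisons is the right intuition. One point to be careful about: your formula for $\pi(r\otimes x)$ is a supremum over a set that need not be upward directed, so as written the supremum may fail to exist in a \CuSgp{} satisfying only \axiomO{1}. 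In \cite{AntPerThi18TensorProdCu} this is handled by first defining the action of a rational $p/q<1$ on $x$ as the supremum of an \emph{increasing} sequence built from successive applications of almost divisibility, and then extending. You should organize the construction that way rather than as a single undirected supremum.
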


If the Cuntz semigroup of a \ca{} $A$ is isomorphic to that of $\mathcal{Z} \otimes A$, then~$A$ is clearly pure.
In the proof of \cite[Theorem~1.2]{Tom11KRigidSlowDimGrowth}, Toms shows that the converse holds for simple \ca{s}. (See also \cite[Section~7]{AntPerPet18PerfConditionsCu}.) 
The following question is thus pertinent:

\begin{qst}
Let $A$ be a pure \ca.
Is $\Cu(A) \cong \Cu(\mathcal{Z} \otimes A)$?
\end{qst}

We start with an important technical result, which generalizes some of the \CuSgp{} techniques underlying Proposition~6.4 and Theorem~10.5 in \cite{AntPerRobThi24TracesUltra}.

\begin{prp}
\label{prp:TechnicalCore}
Let $(S,\Sigma)$ be a scaled \CuSgp{} satisfying \axiomO{5}. 
Assume that $(S,\Sigma)$ has controlled comparison and is functionally divisible. 
Then $S$ is pure.
\end{prp}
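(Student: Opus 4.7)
The plan is to prove both that $S$ is almost unperforated (i.e., has $0$-comparison) and that it is $0$-almost divisible, and hence pure. The essential tool throughout is \cref{prp:EquivLBCA}(iii): a functional inequality $\widehat{x}\leq\gamma\widehat{y}$ with $\gamma<1$, between elements of a common $\Sigma^{(d)}$, yields the amplified semigroup inequality $(M+1)x\leq My$. The functional divisibility hypotheses supply the strict ratios $\gamma<1$: the slack $\tfrac{1}{k}$ in the improved LFAD (\cref{prp:FAD-Improved}(ii)) and the slack $\nu>0$ in UFAD are what make LBCA applicable. I would prove almost unperforation first, then deduce almost divisibility using it.

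For almost divisibility (assuming almost unperforation), fix $x\in S$, $k\geq 2$, and $x'\ll x$; the cases $k\in\{0,1\}$ are trivial via $y:=x$. Choose $x'\ll x'''\ll x$, and apply the improved LFAD to $x'\ll x'''$ with parameter $k$, obtaining $y\in S$ with $ky\leq x'''\leq x$ and $\widehat{x'}\leq(k+\tfrac{1}{k})\widehat{y}$. Since $(k+\tfrac{1}{k})/(k+1)=(k^2+1)/(k^2+k)<1$ for $k\geq 2$, this yields $\widehat{x'}\leq\gamma\widehat{(k+1)y}$ for some $\gamma<1$. Since $x'''\ll x$, both $x'$ and $(k+1)y\leq(k+1)x'''$ lie in a common $\Sigma^{(d)}$, so \cref{prp:EquivLBCA}(iii) gives $(M+1)x'\leq M(k+1)y$. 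Almost unperforation then upgrades this to $x'\leq(k+1)y$, completing $(k,0)$-divisibility.

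For almost unperforation, assume $(n+1)x\leq ny$ and fix $x'\ll x''\ll x$; then $\widehat{x''}\leq\tfrac{n}{n+1}\widehat{y}$. The plan is to subdivide $y$ via improved LFAD applied at a suitable approximation $y'''\ll y$ (chosen to be scale-dominated), obtaining $u\in S$ with $ku\leq y'''\leq y$ and $\widehat{y''''}\leq(k+\tfrac{1}{k})\widehat{u}$ for $y''''\ll y'''$. For $k$ large, one derives a functional inequality of the form $\widehat{x'}\leq\gamma\widehat{ku}$ with $\gamma<1$, the ratio being approximately $\tfrac{n}{n+1}+\tfrac{n}{(n+1)k^2}$. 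LBCA then yields $(M+1)x'\leq M(ku)\leq My$. The final and most delicate step is to cancel the amplification factor $M$ to conclude $x'\leq y$. This requires a dual use of UFAD at $y$ (which supplies a divisor $w\leq y$ with $y'\leq(k+1)w$ concretely and $(k+\nu)\widehat{w}\leq\widehat{y}$ functionally), combined with axiom \axiomO{5} and \cref{thm:O7} (infima with idempotent elements), to construct a complement element that absorbs $M$ via a telescoping argument analogous to those behind \cite[Theorem~10.5]{AntPerRobThi24TracesUltra}.

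The principal obstacle is precisely this last cancellation step: LBCA alone only converts functional comparisons into amplified inequalities, and one cannot remove the amplification by invoking almost unperforation without circularity. The resolution demands the combined use of UFAD (concrete divisors with functional slack) and LFAD (functional divisors with concrete lower bound), together with \axiomO{5} and \axiomO{7}, to engineer the cancellation.
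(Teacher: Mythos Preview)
Your almost-divisibility argument is fine and essentially matches the paper's (the paper uses strict comparison directly rather than passing through LBCA, but your route works too).

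The almost-unperforation argument, however, has a genuine gap. You arrive at $(M+1)x'\leq My$ and then propose to ``cancel the amplification factor $M$'' via a vague combination of UFAD at $y$, \axiomO{5}, \axiomO{7}, and a ``telescoping argument''. This is not a proof; it is precisely the statement you are trying to establish, and you have not explained how UFAD applied to $y$ would break the circularity. (Note also that the paper's proof does not use \axiomO{7} or \cref{thm:O7} at all here.)

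The missing idea is simpler and avoids cancellation entirely: apply UFAD to \emph{$x$} and LFAD to \emph{$y$}, with the divisibility parameter chosen \emph{after} the LBCA constant. Concretely, once $\gamma<\gamma'<1$ and $N=N(\gamma',d)$ are fixed, choose $M\geq N$ large enough that $\gamma\tfrac{M+1}{M-1}\leq\gamma'$. UFAD for $x'\ll x''$ with parameter $M-1$ gives $v\leq x''$ with $x'\leq Mv$ and $(M-1)\widehat{v}\leq\widehat{x''}$. LFAD for $y'\ll y''$ with parameter $M$ gives $w$ with $Mw\leq y''$ and $\widehat{y'}\leq(M+1)\widehat{w}$. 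Combining, $\widehat{v}\leq\gamma'\widehat{w}$, so LBCA yields $Mv\leq Mw$, and then
\[
x'\leq Mv\leq Mw\leq y''\leq y.
\]
The amplification by $M$ is absorbed on both sides by the divisibility, so no cancellation step is needed. Your plan applied divisibility only on the $y$ side (and UFAD on the wrong side), which is why you were left with an uncancellable factor.
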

\begin{proof}
We first prove that $S$ is almost unperforated. 
Let $x,y\in S$, $n\in\NN$, and assume that $(n+1)x\leq ny$. 
We must show that $x\leq y$. 
This is equivalent to showing that $x'\leq y$ for any $x'\in S$ such that $x'\ll x$.
	
Therefore, let $x' \in S$ with $x' \ll x$, and pick $x''\in S$ such that $x' \ll x'' \ll x$.  
Since we have $\widehat{x}\leq\tfrac{n}{n+1}\widehat{y}$, we may choose $\gamma,\gamma'$ such that $\tfrac{n}{n+1}<\gamma<\gamma'<1$. 
Using \cref{prp:wayBelowLFS}(i) one gets $\widehat{x''}\ll\gamma\widehat{y}$, which allows us to pick $y'\in S$ such that
\[
\widehat{x''} \ll \gamma\widehat{y'}, \andSep
y' \ll y.
\]
	
Take $y''$ such that $y'\ll y''\ll y$, and choose $d\in\NN$ such that $x'',y''\in\Sigma^{(d)}$. 
Since $(S,\Sigma)$ has controlled comparison, we obtain $N=N(\gamma',d)\in\NN$ such that 
\[
\widehat{v}\leq\gamma'\widehat{w} \quad \text{ implies } \quad v\leq_N w, \quad \text{ for all } v,w\in\Sigma^{(d)}.
\]
	
Pick $\varepsilon>0$ such that $\gamma \leq (1-\varepsilon)^2\gamma'$.
Applying condition~(U) in \cref{dfn:FuncDiv} for $x'\ll x''$ and $N$, we obtain $v \in S$ satisfying
\[
v\leq x'',\quad 
(1-\varepsilon)N\widehat{v} \leq \widehat{x''}, \andSep
x' \leq Nv.
\]
Similarly, applying condition~(L) in \cref{dfn:FuncDiv} for $y' \ll y''$ and $N$, one gets $w \in S$ such that
\[
Nw \leq y'', \andSep
(1-\varepsilon)\widehat{y'} \leq N\widehat{w}.
\]
	
Combining the functional inequalities for $\widehat{v},\widehat{w}$ and using that $\widehat{x''}\leq\gamma \widehat{y'}$ at the second step, we have
\[
(1-\varepsilon)^2 N \widehat{v} 
\leq (1-\varepsilon)\widehat{x''}
\leq (1-\varepsilon)\gamma\widehat{y'}
\leq \gamma N \widehat{w}
\leq (1-\varepsilon)^2 N \gamma'\widehat{w},
\]
which implies that $\widehat{v}\leq\gamma'\widehat{w}$.
	
Note that both $v$ and $w$ belong to $\Sigma^{(d)}$, since $v\leq x'', Nw\leq y''$, and $x'',y''\in\Sigma^{(d)}$. 
It follows that $v\leq_N w$, and thus
\[
x' \leq Nv \leq Nw \leq y''\leq y,
\]
as desired.

\medskip

To show that $S$ is almost divisible, let $x' \ll x$ in $S$, and let $k \in \NN$.
Pick $\varepsilon\in(0,1)$ such that $(1-\varepsilon)^{-1}k < k+1$.
Applying condition~(L) in \cref{dfn:FuncDiv} for $x'' \ll x$, and for~$k$ and $\varepsilon$, we obtain $y \in S$ such that 
\[
ky \leq x, \andSep
(1-\varepsilon)\widehat{x'} \leq k\widehat{y}.
\]
It follows that $x'\leq \infty y$; 
see, for example, \cite[Lemma~6.6(iii)]{AntPerRobThi22CuntzSR1}. 
Since $(1-\varepsilon)^{-1}k < k+1$, we also have that $\lambda (x')<\lambda ((k+1)y)$ for every functional normalized at $y$. Since $S$ is almost unperforated, we obtain $x'\leq (k+1)y$ by \cite[Proposition~6.2]{EllRobSan11Cone}.
\end{proof}

\begin{rmk}
\label{rmk:adapt}
It is possible to adapt the proof of \cref{prp:TechnicalCore} so that instead of~(U) it suffices to assume the condition ($U_{2^m}$) from \cref{rmk:osti} for all $m$.
Namely, the following stronger form of \cref{prp:TechnicalCore} is available:
If a scaled \CuSgp{} $(S,\Sigma)$ satisfies \axiomO{5} and has controlled comparison, and for every $x' \ll x$ in $S$ and every $\varepsilon \in (0,1)$ there exist $y,z \in S$ such that
\[
2y \leq x, \quad
(1-\varepsilon)\widehat{x'} \leq 2\widehat{y}, \quad
z \leq x, \quad 
(1-\varepsilon)2\widehat{z} \leq \widehat{x}, \andSep 
x' \leq 2z,
\]
then $S$ is pure.
\end{rmk}

\begin{thm}
\label{prp:PureMain}
Given a \ca{} $A$, the following statements are equivalent:
\begin{enumerate}[{\rm (i)}]
\item
$A$ is pure, that is, $A$ has strict comparison and is almost divisible.
\item
$A$ is $(m,n)$-pure for some $m,n\in\NN$.
\item
$A$ has controlled comparison and is functionally divisible. 
\end{enumerate}
\end{thm}
\begin{proof}
It is clear that~(i) implies~(ii).
Further, (ii) implies~(iii) since $m$-comparison implies controlled comparison by \cref{prp:m-comparison-CC}, and $n$-almost divisibility implies functional divisibility by \cref{prp:FD-NAlmDiv}.
Finally, (iii) implies~(i) by \cref{prp:TechnicalCore} since the Cuntz semigroup of every \ca{} satisfies \axiomO{5}.
\end{proof}

\begin{rmk}
If follows from \cref{prp:PureMain} (see also \cref{rmk:n-divisibility}) that $A$ is $(m,n)$-pure if, and only if, $\Cu(A)$ has $m$-comparison and is $n$-almost divisible in the sense of \cite[Definition~3.5]{Win12NuclDimZstable}, that is, given $x\in \Cu(A)$ and $k\geq 1$ , there is $y\in\Cu(A)$ such that $ky\leq x\leq (n+1)(k+1)y$.
\end{rmk}

\begin{rmk}
\cref{prp:PureMain} shows that the combination of weak forms of comparison and divisibility implies strong forms of comparison and divisibility.
One may wonder if this already holds individually for comparison and divisibility.
	
For the comparison properties, this is not the case. Indeed, there exist \ca{s} that have controlled comparison (or even $m$-comparison for some $m \geq 1$) that do not have $0$-comparison.
For example, if $X=[0,1]^5$ is the five-dimensional cube, then $C(X)$ has nuclear dimension at most five, and therefore has $5$-comparison by \cite[Theorem~1.3]{Rob11NuclDimComp}, but it does not have $0$-comparison since the radius of comparison is at least one by \cite[Theorem~1.1]{EllNiu13RadiusCompCommutative}.
	
Similarly, we expect that the answer to the following question is positive.
\end{rmk}

\begin{qst}
Does there exist a \ca{} that is functionally divisible (or even $n$-almost divisible for some $n$) but not almost divisible?
\end{qst}

\begin{prp}
\label{prp:CharPureWithFD}	
Let $A$ be a functionally divisible \ca{}.
Then the following statements are equivalent:
\begin{enumerate}[{\rm (i)}]
\item
$A$ is pure.
\item
$A$ has strict comparison (of positive elements by quasitraces).
\item
$A$ has $m$-comparison for some $m$.
\item
$A$ has controlled comparison.
\end{enumerate}
\end{prp}
\begin{proof}
By definition, pureness implies $0$-comparison, which is equivalent to strict comparison by \cref{rmk:str-comparison}.
This shows that (i) implies~(ii), which in turn implies~(iii).
Further, (iii) implies~(iv) by \cref{prp:m-comparison-CC}.
Finally, (iv) implies~(i) by \cref{prp:PureMain}.
\end{proof}

Combining \cref{prp:CharPureWithFD}	with the results proving functional divisibility from \cref{sec:divisibility}, we can now verify pureness in various settings:

\begin{thm}
\label{prp:PureSR1}
Let $A$ be a nowhere scattered \ca{} that has real rank zero or stable rank one.
Assume that $A$ has controlled comparison (for example, $m$-comparison for some~$m$).
Then $A$ is pure.
\end{thm}
\begin{proof}
If $A$ is nowhere scattered and has real rank zero, then~$A$ is almost divisible as noted in \cref{rmk:n-divisibility}.
If $A$ is nowhere scattered and has stable rank one, then~$A$ is functionally divisible by \cref{prp:FD-SR1}.
Now, in both cases the result follows from \cref{prp:CharPureWithFD}.
\end{proof}

\begin{thm}
\label{prp:PureMonotrace}
Let $A$ be a simple, unital, non-elementary \ca{} with a unique normalized quasitrace.
Assume that $A$ has controlled comparison (for example, $m$-comparison for some $m$).
Then $A$ is pure.
\end{thm}
\begin{proof}
By \cref{prp:FD-Monotrace}, $\Cu(A)$ is functionally divisible.
Now, the result follows from \cref{prp:CharPureWithFD}.
\end{proof}

\begin{exa} 
\label{exa:groupCAlg}
Let $G$ be an infinite, discrete group such that the reduced group \ca{} $C^*_\red(G)$ is simple.
Then $C^*_\red(G)$ is simple, unital, non-elementary and has a unique trace by \cite[Corollary~4.3]{BreKalKenOza17CSimpleUniqueTr}.
If $G$ is exact, then quasitraces on $C^*_\red(G)$ are traces by Haagerup's theorem (\cite{Haa14Quasitraces}), and it follows that $C^*_\red(G)$ has a unique normalized quasitrace.
In this case, \cref{prp:PureMonotrace} applies, and it follows that $C^*_\red(G)$ is pure if, and only if, it has controlled comparison.

It has been proved very recently in \cite{AmrGaoKunPat24} that, for $\mathbb{F}_n$, the free group on $n$ generators, the reduced group \ca{} $C^*_\red(\mathbb{F}_n)$ has strict comparison (equivalently, if it is pure). The result in fact holds for a large class of discrete groups.
\end{exa}

For non-exact groups, we ask the following question:

\begin{qst}
Do simple reduced group \ca{s} have a unique normalized quasitrace?
\end{qst}

\begin{exa}
\label{exa:Villadsen}
In his groundbreaking work on regularity properties for simple, nuclear \ca{s}, Villadsen constructed examples of such algebras that fail strict comparison, and consequently are not pure.
The algebras of `first type' from \cite{Vil98SimpleCaPerforation} have stable rank one (but possibly a complicated trace simplex), while the algebras of `second type' from \cite{Vil99SRSimpleCa} have higher stable rank but a unique tracial state (and consequently also a unique quasitracial state).
Applying \cref{prp:FD-SR1,prp:FD-Monotrace}, we see that both types of Villadsen algebras are automatically functionally divisible.
	
We deduce that Villadsen algebras that fail strict comparison also fail the much weaker property of controlled comparison (and in particular, they do not have $m$-comparison for any~$m$).
\end{exa}

\section{C*-algebras with the Global Glimm Property and finite nuclear dimension}
\label{sec:GGP}

In this section, we prove \cref{thmD}. For this, we use that a \ca{} has the Global Glimm Property precisely when its Cuntz semigroup is $(2,\omega)$-divisible;
see \cite[Theorem~3.6]{ThiVil23Glimm}. 
We then verify a particular statement of the non-simple Toms-Winter conjecture for separable, locally subhomogeneous \ca{s} with stable rank one and topological dimension zero, which yields \cref{ThmE}.

\medskip

The next result is a variation of \cite[Proposition~3.2(ii)]{RobTik17NucDimNonSimple} for nowhere scattered \ca{s}. 
This has essentially appeared in the argument of \cite[Proposition~4.1]{Vil23arX:NWSMultCAlg}.
We use $\dimnuc(A)$ to denote the nuclear dimension of $A$.

\begin{lma}
\label{prp:PreDivFromNucdim}
Let $m\in\NN$, and let $A$ be a nowhere scattered \ca{} satisfying $\dimnuc(A)\leq m$. 
Then for every $x' \ll x$ in $\Cu(A)$ and every $k \in \NN$ with $k\geq 1$, there exists~$y\in\Cu(A)$ such that
\[
y\ll x, \andSep
x'\ll ky \ll 2(m+1)x.
\]
\end{lma}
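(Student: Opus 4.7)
My plan follows \cite[Proposition~3.2(ii)]{RobTik17NucDimNonSimple} and its nowhere-scattered adaptation in \cite[Proposition~4.1]{Vil23arX:NWSMultCAlg}: combine an $(m+1)$-colored decomposition of positive elements coming from $\dimnuc(A)\leq m$ with the divisibility inherent in the hereditary sub-\ca{s} generated by each colored piece, which all inherit nowhere scatteredness from $A$.  The factor $(m+1)$ in the upper bound reflects the $m+1$ colors of the decomposition, and the extra factor of $2$ absorbs the Cuntz-slack between a norm approximation of a positive element and the associated Cuntz subequivalence.

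Concretely, pick $a\in(A\otimes\mathcal{K})_+$ with $[a]=x$, and use axiom~\axiomO{2} to insert a strict chain $x'\ll x_1\ll x_2\ll x_3\ll x$ in $\Cu(A)$ through cut-downs $(a-\delta)_+$ for a decreasing $\delta>0$.  Applying $\dimnuc(\overline{aAa})\leq m$ (using that nuclear dimension passes to hereditary sub-\ca{s}), for small enough $\epsilon>0$ one obtains a finite-dimensional $F=F_0\oplus\cdots\oplus F_m$, a c.p.\ map $\psi\colon\overline{aAa}\to F$, and c.p.c.\ order-zero maps $\phi_i\colon F_i\to\overline{aAa}$ satisfying $\|a-\sum_i\phi_i(\psi_i(a))\|<\epsilon/2$.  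Setting $b_i:=\phi_i(\psi_i(a))$ and $u_i:=[b_i]\in\Cu(A)$, a routine Cuntz calculation gives
\[
x_1\;\leq\;[(a-\epsilon)_+]\;\leq\;u_0+u_1+\cdots+u_m,
\]
with $u_i\leq x$ for each $i$ (since $b_i\in\overline{aAa}$); a further mild shrinking arranges $u_i'\ll u_i\ll x_3$ and $x_1\leq\sum_i u_i'$.

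Next, each hereditary sub-\ca{} $\overline{b_iAb_i}$ is a quotient of $C_0((0,1])\otimes F_i$ by the structure theorem for c.p.c.\ order-zero maps, and hence inherits nowhere scatteredness from $A$.  By \cref{prp:DivO5}-style arguments together with \cite[Theorem~3.6]{ThiVil23Glimm} and the amplification \cite[Lemma~3.4]{ThiVil23Glimm}, each $u_i$ is $(k,\omega)$-divisible; exploiting the explicit matricial structure of $F_i$ along with the positive scaling factor $\phi_i(1_{F_i})$, this can be refined to produce $y_i\in\Cu(A)$ with
\[
ky_i\;\leq\; u_i \andSep u_i'\;\leq\; ky_i.
\]
For $k\geq m+2$, set $y:=y_0+\cdots+y_m$: then repeated application of axiom~\axiomO{3} together with the slack $(m+1)/k<1$ and the strict interpolations of the first step yields $y\ll x$, while summing the bounds over $i$ gives $x'\ll\sum_i u_i'\leq ky$ and $ky\leq\sum_i u_i\ll(m+1)x_3\leq 2(m+1)x$.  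For the remaining values $1\leq k\leq m+1$, taking $y:=x_1$ works directly: then $y\ll x$ by construction, $x'\ll x_1=y\leq ky$, and $ky=kx_1\ll kx\leq 2(m+1)x$.

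The principal obstacle is the refinement at the start of the previous paragraph: promoting the plain $(k,\omega)$-divisibility of $u_i$—which alone delivers only $u_i'\leq\infty y_i$—into the \emph{finite} bound $u_i'\leq ky_i$.  This cannot be done in general for an abstract $(k,\omega)$-divisible element and forces one to open up the internal order-zero structure of $\phi_i$, partitioning the scaling element $\phi_i(1_{F_i})$ spectrally inside its nowhere-scattered hereditary sub-\ca{} and balancing the constants so that the final slack sits at exactly $2(m+1)$.  A secondary difficulty is securing the strict $y\ll x$ in the $k\geq m+2$ case: a naive sum bound only yields $y\ll(m+1)x$, and producing the sharper $y\ll x$ requires placing the $y_i$'s as Cuntz classes of \emph{pairwise orthogonal} positive elements inside $\overline{aAa}$ (using the orthogonality between images of orthogonal matrix units within each $\phi_i(F_i)$), rather than manipulating them as formal sums in $\Cu(A)$.
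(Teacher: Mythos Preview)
The paper's proof is a three-line application of \cite[Proposition~3.2(ii)]{RobTik17NucDimNonSimple}: pick $a\in (A\otimes\mathcal{K})_+$ and $\varepsilon>0$ with $x'\ll[(a-\varepsilon)_+]$ and $[a]\ll x$; observe that $\overline{aAa}$ inherits nowhere scatteredness (hence has no finite-dimensional irreducible representations, by \cite[Theorem~3.1]{ThiVil24NowhereScattered}) and nuclear dimension $\leq m$; then that proposition yields $b\in\overline{aAa}_+$ with $[(a-\varepsilon)_+]\leq k[b]\leq 2(m+1)[a]$, and $y:=[b]$ works.

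Your proposal instead tries to re-derive \cite[Proposition~3.2(ii)]{RobTik17NucDimNonSimple} from scratch, and the gaps you flag yourself are genuine.  First, the route to $(k,\omega)$-divisibility of each $u_i$ is not available: \cite[Theorem~3.6]{ThiVil23Glimm} characterizes the \emph{Global Glimm Property}, not nowhere scatteredness, and whether the latter implies the former is exactly the open Global Glimm Problem.  (The auxiliary claim that $\overline{b_iAb_i}$ ``is a quotient of $C_0((0,1])\otimes F_i$'' is also false: only the sub-\ca{} $C^*(\phi_i(F_i))$ is such a quotient, while the hereditary sub-\ca{} it generates in $A$ is typically much larger.)  Second, even granting $(k,\omega)$-divisibility, the sharp bound $u_i'\leq ky_i\leq u_i$ you need is too strong.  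In the Robert--Tikuisis argument the divisibility slack is precisely what produces the factor~$2$ in $2(m+1)$; your chain $ky\leq\sum_i u_i\leq (m+1)x$ would improve their constant, which is a sign that the stated step cannot hold in general.  Third, the orthogonality you invoke to force $y\ll x$ only holds \emph{within} a single colour $\phi_i$; the maps $\phi_0,\ldots,\phi_m$ are not mutually orthogonal in the definition of nuclear dimension, so no orthogonality argument gives $\sum_i y_i\ll x$ from $ky_i\leq u_i\leq x$.

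The skeleton of your argument is the right one---it is essentially the outline of the Robert--Tikuisis proof---but the hard step is left open and one ingredient assumes the unresolved Global Glimm Problem.  The clean fix is to do what the paper does: verify the hypothesis of \cite[Proposition~3.2(ii)]{RobTik17NucDimNonSimple} (no finite-dimensional representations of $\overline{aAa}$, which follows from nowhere scatteredness) and quote it as a black box.
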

\begin{proof} 
By \cite[Proposition~4.12]{ThiVil24NowhereScattered} and \cite[Proposition~2.3]{WinZac10NuclDim}, $A\otimes\mathcal{K}$ is nowhere scattered and of nuclear dimension at most $m$.
We may thus assume that $A$ is stable.

Choose $a\in A_+$ and $\varepsilon>0$ such that
\[
x' \ll [(a-\varepsilon)_+], \andSep 
[a] \ll x.
\]

Since $A$ is nowhere scattered, the hereditary sub-\ca{} $\overline{aAa}$ has no finite-dimensional representations, by \cite[Theorem 3.1]{ThiVil24NowhereScattered}. It also has nuclear dimension at most $m$ (by \cite[Proposition~2.5]{WinZac10NuclDim}), whence we can apply \cite[Proposition~3.2(ii)]{RobTik17NucDimNonSimple} for $k$ (and for $\overline{aAa}$, $a$ and $\varepsilon$) to obtain $b\in\overline{aAa}_+$ such that
\[
[(a-\varepsilon)_+] \leq k[b] \leq 2(m+1)[a].
\]

Then $y:=[b]\in\Cu(A)$ has the desired properties.
\end{proof}

The next two results are inspired by \cite[Lemma~3.4]{RobTik17NucDimNonSimple}.

\begin{lma}
\label{prp:CreateTwoLargeOrthogonals}
Let $m\in\NN$, and let $A$ be a \ca{} with the Global Glimm Property and with $\dimnuc(A) \leq m$.
Set $L := 14m^2+6$.
Then for every $x'\ll x$ in $\Cu(A)$, there exist $y_0,y_1 \in \Cu(A)$ such that 
\[
y_0+y_1\leq x,\andSep 
x'\ll Ly_0, Ly_1.
\]
\end{lma}
\begin{proof}
Let $x' \ll x$ in $\Cu(A)$. 
We claim that there exists a separable sub-\ca{} $B\subseteq A$ such that
\begin{enumerate}[{\rm (i)}]
\item
$B$ has the Global Glimm Property and $\dimnuc(B)\leq m$;
\item
the induced  inclusion $\Cu (\iota )\colon \Cu (B)\to\Cu (A)$ is an order-embedding;
\item
$x'$ and $x$ are in the image of $\Cu (\iota )$.
\end{enumerate}

Indeed, we know from Proposition~2.3(iii) and~2.6 in \cite{WinZac10NuclDim} that the property `$\dimnuc(\_\,)\leq m$' is separably inheritable. Using the same methods from \cite[Section~5]{ThiVil21DimCu2}, one can also see that the Global Glimm Property satisfies the same condition. Now \cite[Proposition~6.1]{ThiVil21DimCu2} allows us to choose a sub-\ca{} $B$ with the required conditions.

Identifying $\Cu(B)$ with its image in $\Cu(A)$, we view $x',x$ as elements in $\Cu(B)$, and it suffices to find $y_0,y_1 \in \Cu(B)$ with $y_0+y_1\leq x$ and $x'\ll Ly_0, Ly_1$.
Therefore we may assume that $A$ is separable.

\medskip

Choose $x''\in\Cu(A)$ such that $x' \ll x'' \ll x$. Since $A$ has the Global Glimm Property, $\Cu(A)$ is $(2,\omega)$-divisible by \cite[Theorem~3.6]{ThiVil23Glimm}. Applied for $x''$ and $x$, we obtain $c\in\Cu(A)$ such that
\[
2c \leq x, \andSep
x'' \ll \infty c.
\]

Choose $c'\in\Cu(A)$ such that $x'' \leq \infty c'$ and $c'\ll c$. Using \cref{prp:DivO5} with $c'\ll c$ and $2c\leq x$, we find $d\in\Cu (A)$ such that 
\[
c'+d\leq x\leq 2d.
\]

Set $e:=d\wedge\infty c'$, which is possible by \cref{thm:O7}. 
Using at the first step that $x'' \leq \infty c'$ and $x'' \leq x \leq 2d$, we get
\[
x'' \leq (2d)\wedge\infty c' = 2(d\wedge\infty c') = 2e.
\]

Choose $e'',e'\in\Cu(A)$ such that
\[
x' \ll 2e'', \andSep
e'' \ll e' \ll e.
\]

Applying \cref{prp:PreDivFromNucdim} for $k=2m+3$ and $e''\ll e'$, we obtain $f\in\Cu(A)$ such that
\begin{equation}
\label{eq:CreateOrthogonals1}
f\ll e', \andSep
e''\ll (2m+3)f \ll 2(m+1)e'.
\end{equation}

Choose $f'\in\Cu(A)$ such that
\[
e'' \ll (2m+3)f', \andSep
f' \ll f.
\]

Applying \axiomO{5} for $f'\ll f\leq e'$, we obtain $g\in\Cu(A)$ such that
\begin{equation}
\label{eq:CreateOrthogonals2}
f'+g \leq e' \leq f+g.
\end{equation}

Set $y_0:=f'$ and $y_1:=g+c'$. Then
\[
y_0+y_1 
= f'+g+c'
\leq e'+c'
\leq d+c'
\leq x.
\]

Further,
\[
x'
\ll 2e'' 
\ll 2(2m+3)f' 
= (4m+6)y_0
\leq (14m^2+6)y_0
= Ly_0.
\]

Multiplying \eqref{eq:CreateOrthogonals2} by $2m+3$ at the first step, and using  \eqref{eq:CreateOrthogonals1} at the second step, we deduce
\begin{equation}
\label{eq:CreateOrthogonals3}
(2m+3)e'
\leq (2m+3)f + (2m+3)g
\leq (2m+1)e' + (2m+3)g.
\end{equation}

We claim that $\widehat{e'}\leq(2m+3)\widehat{y_1}$. 
To show this, let $\lambda\in F(S)$. 
We distinguish two cases:

First, assume that $\lambda(c')<\infty$. Using that $e'\ll e\leq\infty c'$, it follows that $\lambda(e')<\infty$ which allows, in \eqref{eq:CreateOrthogonals3}, to cancel after applying $\lambda$ to get
\[
\lambda(e') \leq 2\lambda (e')
\leq (2m+3)\lambda(g)
\leq (2m+3)\lambda(y_1).
\]

On the other hand, if $\lambda(c')=\infty$, then 
\[
\lambda(e') 
\leq \infty
= (2m+3)\lambda(c')
\leq (2m+3)\lambda(y_1).
\]

Since $\Cu(A)$ has $m$-comparison (see \cref{rmk:m-comparison}), we get
\[
e' \leq (m+1)(2m+3)y_1.
\]

Using that $x'\ll 2e'$, one has 
\[
x'
\ll 2e'
\leq 2(m+1)(2m+3)y_1
= 2(2m^2+5m+3)y_1
\leq (14m^2+6)y_1
= Ly_1.
\]
This shows that $y_0$ and $y_1$ have the desired properties.
\end{proof}

\begin{lma}
\label{prp:CreateLargeOrthogonals}
Given $m,l\in\NN$, set $L:= (14m^2+6)^l$. Then, for every \ca{} $A$ with the Global Glimm Property and with $\dimnuc(A) \leq m$, and for every $x'\ll x$ in $\Cu(A)$, there exist $y_0,\ldots,y_l\in\Cu(A)$ such that 
\[
y_0+\ldots+y_l\leq x,\andSep 
x'\ll Ly_j
\]
for $j=0,\ldots,l$.
\end{lma}
\begin{proof}
Fix some $m \in \NN$, and let $A$ be a \ca{} that has the Global Glimm Property and with $\dimnuc(A) \leq m$.
Write $L(m,l) := (14m^2+6)^l$ for $l \geq 0$.

By induction over $l$, we verify that the following statement holds:
\begin{enumerate}
\item[($D_l$)]
For every $x' \ll x$ in $\Cu(A)$ there exist $y_0,\ldots,y_l\in\Cu(A)$ such that 
\[
y_0+\ldots+y_l\leq x,\andSep 
x'\ll L(m,l)y_j
\]
for $j=0,\ldots,l$.
\end{enumerate}

To verify ($D_0$), given $x' \ll x$ in $\Cu(A)$ we simply use $y_0 = x$.
Further, ($D_1$) was shown in \cref{prp:CreateTwoLargeOrthogonals}.
Now let $l\in\NN$ with $l \geq 2$, and assume that we have proved~($D_{l-1}$). 
To verify~($D_{l}$), let $x' \ll x$ in $\Cu (A)$. 
Applying ($D_{l-1}$) for $x' \ll x$, we get $z_0,\ldots ,z_{l-1} \in \Cu(A)$ such that 
\[
z_0+\ldots+z_{l-1}\leq x, \andSep
x'\ll L(m,l-1)z_j
\]
for $j=0,\ldots,l-1$. 

Set $y_j:=z_j$ for $j = 0,\ldots,l-2$, and take $z_{l-1}'\in\Cu (A)$ such that
\[
z_{l-1}'\ll z_{l-1}, \andSep 
x'\ll L(m,l-1)z_{l-1}'.
\]

Applying ($D_{1}$) for $z_{l-1}'\ll z_{l-1}$, we find $y_{l-1}, y_l\in \Cu (A)$ satisfying 
\[
y_{l-1}+y_l \leq z_{l-1},\andSep 
z_{l-1}'\ll L(m,1) y_{l-1}, L(m,1) y_l.
\]
Then $y_0+\ldots+y_l \leq x$, and
\[
x' 
\ll L(m,l-1)z_j
\leq L(m,l)z_j
= L(m,l)y_j
\]
for $j=0,\ldots,l-2$ and
\[
x'
\ll L(m,l-1)z_{l-1}'
\ll L(m,l-1)L(m,1) y_j
= L(m,l)y_j
\]
for $j=l-1,l$.
This shows that $y_0,\ldots,y_l$ have the desired properties.
\end{proof}

\begin{prp}
\label{prp:DivFromNucdim}
Given $m \in \NN$, there exists $N=N(m)$ with the following property:
For every \ca{} $A$ with the Global Glimm Property and $\dimnuc(A) \leq m$, the Cuntz semigroup $\Cu(A)$ is $N$-almost divisible.
\end{prp}
\begin{proof}
The proof is inspired by that of \cite[Lemma~3.6]{RobTik17NucDimNonSimple}. 
Let $L=L(m,m)$ be the constant obtained from \cref{prp:CreateLargeOrthogonals}. 
We verify the statement for $N:=3L(m+1)$.

Let $A$ be a \ca{} with the Global Glimm Property and $\dimnuc(A) \leq m$.
To show that $\Cu(A)$ is $N$-almost divisible, let $x'\ll x$ in $\Cu(A)$, and let $k\geq 1$. 
We need to find $y \in \Cu(A)$ such that 
\[
ky\leq x, \andSep x'\leq (k+1)(N+1)y.
\]

Applying \cref{prp:PreDivFromNucdim} for $3kL(m+1)$ and $x'\ll x$, we obtain $c\in\Cu(A)$ such that
\[
c\ll x, \andSep
x'\ll 3kL(m+1)c \ll 2(m+1)x.
\]

Choose $c'\in\Cu(A)$ such that
\[
x'\ll 3kL(m+1)c', \andSep
c'\ll c.
\]

Now choose $x''\in\Cu(A)$ such that
\[
3kL(m+1)c' \ll 2(m+1)x'', \andSep
x''\ll x.
\]

Applying \cref{prp:CreateLargeOrthogonals} for $x''\ll x$, we obtain $y_0,\ldots,y_m$ such that
\[
y_0+\ldots+y_m \leq x, \andSep
x'' \ll Ly_j \quad\text{ for $j=0,\ldots,m$}.
\]

For each $j$, we obtain
\[
3kL(m+1)c' 
\ll 2(m+1)x''
\ll 2L(m+1)y_j.
\]
and thus $kc'<_s y_j$.

As noted in \cref{rmk:m-comparison}, $A$ has $m$-comparison, and we get
\[
kc'\leq y_0+\ldots+y_m \leq x.
\]

Further,
\[
x' \ll 3kL(m+1)c'
\leq 3(k+1)L(m+1)c'
= (k+1)Nc'\leq (k+1)(N+1)c',
\]
which shows that $y=c'$ has the desired properties.
\end{proof}

\begin{thm}
\label{prp:GGP-finNucDim-pure}
Every \ca{} with the Global Glimm Property and finite nuclear dimension is pure.
\end{thm}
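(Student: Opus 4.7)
The proof essentially assembles the key results just established: \cref{prp:DivFromNucdim} provides divisibility and \cref{prp:FinNucDim-LBCA} provides the comparison ingredient.

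The plan is as follows. Let $A$ be a \ca{} with the Global Glimm Property and nuclear dimension at most $m$ for some $m \in \NN$. First, apply \cref{prp:DivFromNucdim} to obtain an integer $N=N(m)$ such that $\Cu(A)$ is $N$-almost divisible. Second, apply \cref{prp:FinNucDim-LBCA} to conclude that $\Cu(A)$ has locally bounded comparison amplitude. At this point, the hypotheses of \cref{prp:Pure-From-LBCA} are met, and we immediately conclude that $A$ is pure.

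Alternatively, one may short-circuit the argument with $(m,n)$-pureness: by Robert's result recorded in \cref{rmk:m-comparison}(1), a \ca{} with nuclear dimension at most $m$ has $m$-comparison, and combined with the $N$-almost divisibility from \cref{prp:DivFromNucdim} this says that $A$ is $(m,N)$-pure, so \cref{prp:PureMain} yields that $A$ is pure. Both routes work; the first is slightly more direct since it avoids quoting \cref{prp:PureMain} and instead goes through the technical heart (\cref{prp:TechnicalCore}).

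There is no real obstacle here, since the substantive technical work has been done in the preceding sections. The only thing to verify is that the hypotheses of \cref{prp:DivFromNucdim} and \cref{prp:FinNucDim-LBCA} are exactly what we assume, which is immediate: the Global Glimm Property and finite nuclear dimension are precisely the standing hypotheses of the theorem. Accordingly, the written proof should be a single short paragraph consisting of the two invocations above and the conclusion.
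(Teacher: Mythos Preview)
Your proposal is correct, and your second (alternative) route is exactly the argument the paper gives: invoke \cref{rmk:m-comparison}(1) for $m$-comparison, \cref{prp:DivFromNucdim} for $N$-almost divisibility, conclude that $A$ is $(m,N)$-pure, and finish with \cref{prp:PureMain}. Your first route via \cref{prp:FinNucDim-LBCA} and \cref{prp:Pure-From-LBCA} is equally valid and amounts to unpacking one step of \cref{prp:PureMain}; either way there is nothing more to do.
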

\begin{proof}
Let $m \in \NN$, and let $A$ be a \ca{} with the Global Glimm Property and $\dimnuc(A) \leq m$.
Then $A$ has $m$-comparison, as noted in \cref{rmk:m-comparison}.
Further, by \cref{prp:DivFromNucdim}, we know that $A$ is $n$-almost divisible for some~$n$. 
Hence, $A$ is $(m,n)$-pure, and the result now follows from \cref{prp:PureMain}.
\end{proof}

\begin{thm}
\label{prp:NonsimpleTW}
Let $A$ be a separable, locally subhomogeneous \ca{} that has stable rank one and topological dimension zero.
Then the following statements are equivalent:
\begin{enumerate}
\item[(1)]
$A$ has the Global Glimm Property and finite nuclear dimension.
\item[(2)]
$A$ is $\mathcal{Z}$-stable.
\item[(3a)]
$A$ is pure.
\item[(3b)]
$A$ is nowhere scattered and has strict comparison of positive elements.
\end{enumerate}
\end{thm}
\begin{proof}
By \cref{prp:GGP-finNucDim-pure}, (1) implies~(3a).
By \cref{prp:ZStablePure}, (2) implies~(3a), and~(3a) implies~(3b).
By \cref{prp:PureSR1}, (3b) implies~(3a).

Every locally subhomogeneous \ca{} has locally finite nuclear dimension by  \cite{NgWin06NoteSH}.
Since locally subhomogeneous \ca{s} have no simple, purely infinite ideal-quotients, it follows from \cite[Theorem~7.10]{RobTik17NucDimNonSimple} that (3a) implies~(2).

Finally, by \cite[Theorem~A]{EllNiuSanTik20drASH}, every $\mathcal{Z}$-stable, locally subhomogeneous \ca{} has decomposition rank at most two, and therefore finite nuclear dimension.
Further, by \cref{prp:ZStablePure}, $\mathcal{Z}$-stability implies the Global Glimm Property.
This shows that~(2) implies~(1).
\end{proof}



\begin{thebibliography}{APGPSM10}

\bibitem[AGKP24]{AmrGaoKunPat24}
\bgroup\scshape{}T.~Amrutan\egroup{}, \bgroup\scshape{}D.~Gao\egroup{}, \bgroup\scshape{}S.~Kunnawalkam Elayavalli\egroup{}, and
\bgroup\scshape{}G.~Patchell\egroup{}, Strict comparison in reduced group \ca{s}, preprint (arXiv:2412.06031v1 [math.OA]), 2024.

\bibitem[APP18]{AntPerPet18PerfConditionsCu}
\bgroup\scshape{}R.~Antoine\egroup{}, \bgroup\scshape{}F.~Perera\egroup{}, and
  \bgroup\scshape{}H.~Petzka\egroup{}, Perforation conditions and almost
  algebraic order in {C}untz semigroups,  \emph{Proc. Roy. Soc. Edinburgh Sect.
  A} \textbf{148} (2018), 669--702.

\bibitem[APRT21]{AntPerRobThi21Edwards}
\bgroup\scshape{}R.~Antoine\egroup{}, \bgroup\scshape{}F.~Perera\egroup{},
  \bgroup\scshape{}L.~Robert\egroup{}, and \bgroup\scshape{}H.~Thiel\egroup{},
  Edwards' condition for quasitraces on \ca{s},  \emph{Proc. Roy. Soc.
  Edinburgh Sect. A} \textbf{151} (2021), 525--547.

\bibitem[APRT22]{AntPerRobThi22CuntzSR1}
\bgroup\scshape{}R.~Antoine\egroup{}, \bgroup\scshape{}F.~Perera\egroup{},
  \bgroup\scshape{}L.~Robert\egroup{}, and \bgroup\scshape{}H.~Thiel\egroup{},
  \ca{s} of stable rank one and their {C}untz semigroups,  \emph{Duke Math. J.}
  \textbf{171} (2022), 33--99.

\bibitem[APRT24]{AntPerRobThi24TracesUltra}
\bgroup\scshape{}R.~Antoine\egroup{}, \bgroup\scshape{}F.~Perera\egroup{},
  \bgroup\scshape{}L.~Robert\egroup{}, and \bgroup\scshape{}H.~Thiel\egroup{},
  Traces on ultrapowers of \ca{s},  \emph{J. Funct. Anal.} \textbf{286} (2024),
  Paper No. 110341.

\bibitem[APS11]{AntPerSan11PullbacksCu}
\bgroup\scshape{}R.~Antoine\egroup{}, \bgroup\scshape{}F.~Perera\egroup{}, and
  \bgroup\scshape{}L.~Santiago\egroup{}, Pullbacks, {$C(X)$}-algebras, and
  their {C}untz semigroup,  \emph{J. Funct. Anal.} \textbf{260} (2011),
  2844--2880.

\bibitem[APT18]{AntPerThi18TensorProdCu}
\bgroup\scshape{}R.~Antoine\egroup{}, \bgroup\scshape{}F.~Perera\egroup{}, and
  \bgroup\scshape{}H.~Thiel\egroup{}, Tensor products and regularity properties
  of {C}untz semigroups,  \emph{Mem. Amer. Math. Soc.} \textbf{251} (2018),
  viii+191.

\bibitem[APT20a]{AntPerThi20AbsBivariantCu}
\bgroup\scshape{}R.~Antoine\egroup{}, \bgroup\scshape{}F.~Perera\egroup{}, and
  \bgroup\scshape{}H.~Thiel\egroup{}, Abstract bivariant {C}untz semigroups,
  \emph{Int. Math. Res. Not. IMRN} (2020), 5342--5386.

\bibitem[APT20b]{AntPerThi20AbsBivarII}
\bgroup\scshape{}R.~Antoine\egroup{}, \bgroup\scshape{}F.~Perera\egroup{}, and
  \bgroup\scshape{}H.~Thiel\egroup{}, Abstract bivariant {C}untz semigroups
  {II},  \emph{Forum Math.} \textbf{32} (2020), 45--62.

\bibitem[APT20c]{AntPerThi20CuntzUltraproducts}
\bgroup\scshape{}R.~Antoine\egroup{}, \bgroup\scshape{}F.~Perera\egroup{}, and
  \bgroup\scshape{}H.~Thiel\egroup{}, Cuntz semigroups of ultraproduct \ca{s},
  \emph{J. Lond. Math. Soc. (2)} \textbf{102} (2020), 994--1029.

\bibitem[APT11]{AraPerTom11Cu}
\bgroup\scshape{}P.~Ara\egroup{}, \bgroup\scshape{}F.~Perera\egroup{}, and
  \bgroup\scshape{}A.~S. Toms\egroup{}, {$K$}-theory for operator algebras.
  {C}lassification of \ca{s},  in \emph{Aspects of operator algebras and
  applications}, \emph{Contemp. Math.} \textbf{534}, Amer. Math. Soc.,
  Providence, RI, 2011, pp.~1--71.

\bibitem[APGPSM10]{AraGooPerSil10NonSimplePI}
\bgroup\scshape{}G.~Aranda~Pino\egroup{}, \bgroup\scshape{}K.~R.
  Goodearl\egroup{}, \bgroup\scshape{}F.~Perera\egroup{}, and
  \bgroup\scshape{}M.~Siles~Molina\egroup{}, Non-simple purely infinite rings,
  \emph{Amer. J. Math.} \textbf{132} (2010), 563--610.

\bibitem[AVTV23]{AsaThiVil23arX:RksSoftOps}
\bgroup\scshape{}M.~A. Asadi-Vasfi\egroup{},
  \bgroup\scshape{}H.~Thiel\egroup{}, and \bgroup\scshape{}E.~Vilalta\egroup{},
  Ranks of soft operators in nowhere scattered \ca{s}, preprint
  (arXiv:2310.00663 [math.OA]), 2023.

\bibitem[Bla88]{Bla88Comparison}
\bgroup\scshape{}B.~Blackadar\egroup{}, Comparison theory for simple \ca{s},
  in \emph{Operator algebras and applications, {V}ol.\ 1}, \emph{London Math.
  Soc. Lecture Note Ser.} \textbf{135}, Cambridge Univ. Press, Cambridge, 1988,
  pp.~21--54.

\bibitem[BL24]{BoeLi24NucDimSubhomTwGpd}
\bgroup\scshape{}C.~B\"{o}nicke\egroup{} and \bgroup\scshape{}K.~Li\egroup{},
  Nuclear {D}imension of {S}ubhomogeneous {T}wisted {G}roupoid \ca{s} and
  {D}ynamic {A}symptotic {D}imension,  \emph{Int. Math. Res. Not. IMRN} (2024),
  11597--11610.

\bibitem[BGSW22]{BosGabSimWhi22NucDimOStableMaps}
\bgroup\scshape{}J.~Bosa\egroup{}, \bgroup\scshape{}J.~Gabe\egroup{},
  \bgroup\scshape{}A.~Sims\egroup{}, and \bgroup\scshape{}S.~White\egroup{},
  The nuclear dimension of {$\mathcal{O}_\infty$}-stable \ca{s},  \emph{Adv.
  Math.} \textbf{401} (2022), Paper No. 108250, 51.

\bibitem[BKKO17]{BreKalKenOza17CSimpleUniqueTr}
\bgroup\scshape{}E.~Breuillard\egroup{}, \bgroup\scshape{}M.~Kalantar\egroup{},
  \bgroup\scshape{}M.~Kennedy\egroup{}, and \bgroup\scshape{}N.~Ozawa\egroup{},
  {$C^*$}-simplicity and the unique trace property for discrete groups,
  \emph{Publ. Math. Inst. Hautes \'Etudes Sci.} \textbf{126} (2017), 35--71.

\bibitem[CGS{\etalchar{+}}23]{CarGabSchTikWhi23arX:ClassifHom1}
\bgroup\scshape{}J.~R. Carri\'on\egroup{}, \bgroup\scshape{}J.~Gabe\egroup{},
  \bgroup\scshape{}C.~Schafhauser\egroup{},
  \bgroup\scshape{}A.~Tikuisis\egroup{}, and
  \bgroup\scshape{}S.~White\egroup{}, Classifying ${}^*$-homomorphisms {I}:
  Unital simple nuclear \ca{s}, preprint (arXiv:2307.06480 [math.OA]), 2023.

\bibitem[CET{\etalchar{+}}21]{CasEviTikWhiWin21NucDimSimple}
\bgroup\scshape{}J.~Castillejos\egroup{},
  \bgroup\scshape{}S.~Evington\egroup{}, \bgroup\scshape{}A.~Tikuisis\egroup{},
  \bgroup\scshape{}S.~White\egroup{}, and \bgroup\scshape{}W.~Winter\egroup{},
  Nuclear dimension of simple \ca{s},  \emph{Invent. Math.} \textbf{224}
  (2021), 245--290.

\bibitem[CEI08]{CowEllIva08CuInv}
\bgroup\scshape{}K.~T. Coward\egroup{}, \bgroup\scshape{}G.~A.
  Elliott\egroup{}, and \bgroup\scshape{}C.~Ivanescu\egroup{}, The {C}untz
  semigroup as an invariant for \ca{s},  \emph{J.\ Reine Angew.\ Math.}
  \textbf{623} (2008), 161--193.

\bibitem[Cun78]{Cun78DimFct}
\bgroup\scshape{}J.~Cuntz\egroup{}, Dimension functions on simple \ca{s},
  \emph{Math. Ann.} \textbf{233} (1978), 145--153.

\bibitem[DHTW09]{DadHirTomWin09ZNotEmb}
\bgroup\scshape{}M.~Dadarlat\egroup{}, \bgroup\scshape{}I.~Hirshberg\egroup{},
  \bgroup\scshape{}A.~S. Toms\egroup{}, and
  \bgroup\scshape{}W.~Winter\egroup{}, The {J}iang-{S}u algebra does not always
  embed,  \emph{Math. Res. Lett.} \textbf{16} (2009), 23--26.

\bibitem[EN13]{EllNiu13RadiusCompCommutative}
\bgroup\scshape{}G.~A. Elliott\egroup{} and \bgroup\scshape{}Z.~Niu\egroup{},
  On the radius of comparison of a commutative \ca{},  \emph{Canad. Math.
  Bull.} \textbf{56} (2013), 737--744.

\bibitem[ENST20]{EllNiuSanTik20drASH}
\bgroup\scshape{}G.~A. Elliott\egroup{}, \bgroup\scshape{}Z.~Niu\egroup{},
  \bgroup\scshape{}L.~Santiago\egroup{}, and
  \bgroup\scshape{}A.~Tikuisis\egroup{}, Decomposition rank of approximately
  subhomogeneous \ca{s},  \emph{Forum Math.} \textbf{32} (2020), 827--889.

\bibitem[ERS11]{EllRobSan11Cone}
\bgroup\scshape{}G.~A. Elliott\egroup{}, \bgroup\scshape{}L.~Robert\egroup{},
  and \bgroup\scshape{}L.~Santiago\egroup{}, The cone of lower semicontinuous
  traces on a \ca{},  \emph{Amer. J. Math.} \textbf{133} (2011), 969--1005.

\bibitem[ER06]{EllRor06Perturb}
\bgroup\scshape{}G.~A. Elliott\egroup{} and
  \bgroup\scshape{}M.~R{\o}rdam\egroup{}, Perturbation of {H}ausdorff moment
  sequences, and an application to the theory of \ca{s} of real rank zero,  in
  \emph{Operator {A}lgebras: {T}he {A}bel {S}ymposium 2004}, \emph{Abel Symp.}
  \textbf{1}, Springer, Berlin, 2006, pp.~97--115.

\bibitem[GP24]{GarPer23arX:ModernCu}
\bgroup\scshape{}E.~Gardella\egroup{} and \bgroup\scshape{}F.~Perera\egroup{},
  The modern theory of {C}untz semigroups of \ca{s}, EMS Surv. Math. Sci. (to
  appear), DOI: 10.4171/EMSS/84, 2024.

\bibitem[GLN20]{GonLinNiu20ClassifZstable2}
\bgroup\scshape{}G.~Gong\egroup{}, \bgroup\scshape{}H.~Lin\egroup{}, and
  \bgroup\scshape{}Z.~Niu\egroup{}, A classification of finite simple amenable
  {$\mathcal{Z}$}-stable \ca{s}, {II}: \ca{s} with rational generalized tracial
  rank one,  \emph{C. R. Math. Acad. Sci. Soc. R. Can.} \textbf{42} (2020),
  451--539.

\bibitem[Haa14]{Haa14Quasitraces}
\bgroup\scshape{}U.~Haagerup\egroup{}, Quasitraces on exact \ca{s} are traces,
  \emph{C. R. Math. Acad. Sci. Soc. R. Can.} \textbf{36} (2014), 67--92.

\bibitem[{Kei}17]{Kei17CuSgpDomainThy}
\bgroup\scshape{}K.~{Keimel}\egroup{}, The {C}untz semigroup and domain theory,
   \emph{Soft Comput.} \textbf{21} (2017), 2485--2502.

\bibitem[KR14]{KirRor14CentralSeq}
\bgroup\scshape{}E.~Kirchberg\egroup{} and
  \bgroup\scshape{}M.~R{\o}rdam\egroup{}, Central sequence \ca{s} and tensorial
  absorption of the {J}iang-{S}u algebra,  \emph{J. Reine Angew. Math.}
  \textbf{695} (2014), 175--214.

\bibitem[NR16]{NgRob16CommutatorsPureCa}
\bgroup\scshape{}P.~W. Ng\egroup{} and \bgroup\scshape{}L.~Robert\egroup{},
  Sums of commutators in pure \ca{s},  \emph{M\"{u}nster J. Math.} \textbf{9}
  (2016), 121--154.

\bibitem[NW06]{NgWin06NoteSH}
\bgroup\scshape{}P.~W. Ng\egroup{} and \bgroup\scshape{}W.~Winter\egroup{}, A
  note on subhomogeneous \ca{s},  \emph{C. R. Math. Acad. Sci. Soc. R. Can.}
  \textbf{28} (2006), 91--96.

\bibitem[OPR12]{OrtPerRor12CoronaStability}
\bgroup\scshape{}E.~Ortega\egroup{}, \bgroup\scshape{}F.~Perera\egroup{}, and
  \bgroup\scshape{}M.~R{\o}rdam\egroup{}, The corona factorization property,
  stability, and the {C}untz semigroup of a \ca{},  \emph{Int. Math. Res. Not.
  IMRN} (2012), 34--66.

\bibitem[PR04]{PerRor04AFembeddings}
\bgroup\scshape{}F.~Perera\egroup{} and \bgroup\scshape{}M.~R{\o}rdam\egroup{},
  A{F}-embeddings into \ca{s} of real rank zero,  \emph{J. Funct. Anal.}
  \textbf{217} (2004), 142--170.

\bibitem[PTV24]{PerThiVil24pre:CentrallyPure}
\bgroup\scshape{}F.~Perera\egroup{}, \bgroup\scshape{}H.~Thiel\egroup{}, and
  \bgroup\scshape{}E.~Vilalta\egroup{}, Centrally pure \ca{s}, in preparation,
  2024.

\bibitem[Rob11]{Rob11NuclDimComp}
\bgroup\scshape{}L.~Robert\egroup{}, Nuclear dimension and {$n$}-comparison,
  \emph{M\"{u}nster J. Math.} \textbf{4} (2011), 65--71.

\bibitem[Rob13]{Rob13Cone}
\bgroup\scshape{}L.~Robert\egroup{}, The cone of functionals on the {C}untz
  semigroup,  \emph{Math. Scand.} \textbf{113} (2013), 161--186.

\bibitem[RR13]{RobRor13Divisibility}
\bgroup\scshape{}L.~Robert\egroup{} and \bgroup\scshape{}M.~R{\o}rdam\egroup{},
  Divisibility properties for \ca{s},  \emph{Proc. Lond. Math. Soc. (3)}
  \textbf{106} (2013), 1330--1370.

\bibitem[RT17]{RobTik17NucDimNonSimple}
\bgroup\scshape{}L.~Robert\egroup{} and \bgroup\scshape{}A.~Tikuisis\egroup{},
  Nuclear dimension and {$\mathcal{Z}$}-stability of non-simple \ca{s},
  \emph{Trans. Amer. Math. Soc.} \textbf{369} (2017), 4631--4670.

\bibitem[R{\o}r03]{Ror03FinInfProj}
\bgroup\scshape{}M.~R{\o}rdam\egroup{}, A simple \ca{} with a finite and an
  infinite projection,  \emph{Acta Math.} \textbf{191} (2003), 109--142.

\bibitem[R{\o}r04]{Ror04StableRealRankZ}
\bgroup\scshape{}M.~R{\o}rdam\egroup{}, The stable and the real rank of
  {$\mathcal{Z}$}-absorbing \ca{s},  \emph{Internat. J. Math.} \textbf{15}
  (2004), 1065--1084.

\bibitem[RW10]{RorWin10ZRevisited}
\bgroup\scshape{}M.~R{\o}rdam\egroup{} and \bgroup\scshape{}W.~Winter\egroup{},
  The {J}iang-{S}u algebra revisited,  \emph{J. Reine Angew. Math.}
  \textbf{642} (2010), 129--155.

\bibitem[Sat12]{Sat12arx:TraceSpace}
\bgroup\scshape{}Y.~Sato\egroup{}, Trace spaces of simple nuclear \ca{s} with
  finite-dimensional extreme boundary, preprint (arXiv:1209.3000 [math.OA]),
  2012.

\bibitem[Thi20]{Thi20RksOps}
\bgroup\scshape{}H.~Thiel\egroup{}, Ranks of operators in simple \ca{s} with
  stable rank one,  \emph{Comm. Math. Phys.} \textbf{377} (2020), 37--76.

\bibitem[TV21]{ThiVil21DimCu2}
\bgroup\scshape{}H.~Thiel\egroup{} and \bgroup\scshape{}E.~Vilalta\egroup{},
  Covering dimension of {C}untz semigroups {II},  \emph{Internat. J. Math.}
  \textbf{32} (2021), 27~p., Paper No. 2150100.

\bibitem[TV23]{ThiVil23Glimm}
\bgroup\scshape{}H.~Thiel\egroup{} and \bgroup\scshape{}E.~Vilalta\egroup{},
  The {G}lobal {G}limm {P}roperty,  \emph{Trans. Amer. Math. Soc.} \textbf{376}
  (2023), 4713--4744.

\bibitem[TV24a]{ThiVil24NowhereScattered}
\bgroup\scshape{}H.~Thiel\egroup{} and \bgroup\scshape{}E.~Vilalta\egroup{},
  Nowhere scattered \ca{s},  \emph{J. Noncommut. Geom.} \textbf{18} (2024),
  231--263.

\bibitem[TV24b]{ThiVil24SoftOps}
\bgroup\scshape{}H.~Thiel\egroup{} and \bgroup\scshape{}E.~Vilalta\egroup{},
  Soft operators in \ca{s},  \emph{J. Math. Anal. Appl.} \textbf{536} (2024),
  Paper No. 128167.

\bibitem[TWW17]{TikWhiWin17QDNuclear}
\bgroup\scshape{}A.~Tikuisis\egroup{}, \bgroup\scshape{}S.~White\egroup{}, and
  \bgroup\scshape{}W.~Winter\egroup{}, Quasidiagonality of nuclear \ca{s},
  \emph{Ann. of Math. (2)} \textbf{185} (2017), 229--284.

\bibitem[Tom08]{Tom08ClassificationNuclear}
\bgroup\scshape{}A.~S. Toms\egroup{}, On the classification problem for nuclear
  \ca{s},  \emph{Ann. of Math. (2)} \textbf{167} (2008), 1029--1044.

\bibitem[Tom11]{Tom11KRigidSlowDimGrowth}
\bgroup\scshape{}A.~S. Toms\egroup{}, K-theoretic rigidity and slow dimension
  growth,  \emph{Invent. Math.} \textbf{183} (2011), 225--244.

\bibitem[TWW15]{TomWhiWin15ZStableFdBauer}
\bgroup\scshape{}A.~S. Toms\egroup{}, \bgroup\scshape{}S.~White\egroup{}, and
  \bgroup\scshape{}W.~Winter\egroup{}, {$\mathcal{Z}$}-stability and
  finite-dimensional tracial boundaries,  \emph{Int. Math. Res. Not. IMRN}
  (2015), 2702--2727.

\bibitem[TW09]{TomWin09Villadsen}
\bgroup\scshape{}A.~S. Toms\egroup{} and \bgroup\scshape{}W.~Winter\egroup{},
  The {E}lliott conjecture for {V}illadsen algebras of the first type,
  \emph{J. Funct. Anal.} \textbf{256} (2009), 1311--1340.

\bibitem[Vil23]{Vil23arX:NWSMultCAlg}
\bgroup\scshape{}E.~Vilalta\egroup{}, Nowhere scattered multiplier algebras,
  Proc. Roy. Soc. Edinburgh Sect. A (to appear), DOI: 10.1017/prm.2023.123,
  2023.

\bibitem[Vil98]{Vil98SimpleCaPerforation}
\bgroup\scshape{}J.~Villadsen\egroup{}, Simple \ca{s} with perforation,
  \emph{J. Funct. Anal.} \textbf{154} (1998), 110--116.

\bibitem[Vil99]{Vil99SRSimpleCa}
\bgroup\scshape{}J.~Villadsen\egroup{}, On the stable rank of simple \ca{s},
  \emph{J. Amer. Math. Soc.} \textbf{12} (1999), 1091--1102.

\bibitem[Whi23]{Whi23AbstractClassif}
\bgroup\scshape{}S.~White\egroup{}, Abstract classification theorems for
  amenable \ca{s},  in \emph{I{CM}---{I}nternational {C}ongress of
  {M}athematicians. {V}ol. 4. {S}ections 5--8}, EMS Press, Berlin, 2023,
  pp.~3314--3338.

\bibitem[Win12]{Win12NuclDimZstable}
\bgroup\scshape{}W.~Winter\egroup{}, Nuclear dimension and
  {$\mathcal{Z}$}-stability of pure \ca{s},  \emph{Invent. Math.} \textbf{187}
  (2012), 259--342.

\bibitem[Win14]{Win14LocalEllConj}
\bgroup\scshape{}W.~Winter\egroup{}, Localizing the {E}lliott conjecture at
  strongly self-absorbing \ca{s},  \emph{J. Reine Angew. Math.} \textbf{692}
  (2014), 193--231.

\bibitem[Win18]{Win18ICM}
\bgroup\scshape{}W.~Winter\egroup{}, Structure of nuclear \ca{s}: from
  quasidiagonality to classification and back again,  in \emph{Proceedings of
  the {I}nternational {C}ongress of {M}athematicians---{R}io de {J}aneiro 2018.
  {V}ol. {III}. {I}nvited lectures}, World Sci. Publ., Hackensack, NJ, 2018,
  pp.~1801--1823.

\bibitem[WZ10]{WinZac10NuclDim}
\bgroup\scshape{}W.~Winter\egroup{} and \bgroup\scshape{}J.~Zacharias\egroup{},
  The nuclear dimension of \ca{s},  \emph{Adv. Math.} \textbf{224} (2010),
  461--498.

\end{thebibliography}

\providecommand{\etalchar}[1]{$^{#1}$}
\providecommand{\href}[2]{#2}

\end{document}